\documentclass{amsart}
\usepackage{amsfonts,amssymb,amsmath,amsthm,mathrsfs}
\usepackage{url}
\usepackage{enumerate}

\urlstyle{sf}
\newtheorem{theorem}{Theorem}[section]
\newtheorem{lemma}[theorem]{Lemma}

\newtheorem{corollary}[theorem]{Corollary}
\theoremstyle{definition}
\newtheorem{definition}[theorem]{Definition}
\newtheorem{remark}[theorem]{Remark}
\numberwithin{equation}{section}
\newtheorem{assumption}[theorem]{Assumption}

\begin{document}

\title[singular integral operators]{Weighted vector-valued bounds for the singular integral operators with nonsmooth kernels}

\author{Guoen Hu}
\address{Department of Applied Mathematics, Zhengzhou Information Science and Technology
Institute, Zhengzhou 450001, P. R. China}
\email{guoenxx@163.com}
\thanks{The research    was supported by
the NNSF of
China under grant $\#$11371370.}

\keywords{weighted vector-valued
inequality, singular integral operator, nonsmooth kernel, sparse operator, sharp maximal operator, }

\subjclass[2010]{42B20}
\begin{abstract}Let $T$ be a singular integral operator with non-smooth kernel which was introduced by Duong and McIntosh. In this paper, we prove that this operator and  its corresponding grand maximal operator satisfy certain weak type endpoint vector-valued estimate of $L\log L$ type. As an application  we established a refined weighted vector-valued bound  for this operator.
\end{abstract}

\maketitle

\section{Introduction}
We will work on $\mathbb{R}^n$, $n\geq 1$. Let $A_p(\mathbb{R}^n)$ ($p\in (1,\,\infty)$) be the weight functions class of Muckenhoupt, that is, $w\in A_{p}(\mathbb{R}^n)$ if $w$ is nonnegative and locally integrable, and
$$[w]_{A_p}:=\sup_{Q}\Big(\frac{1}{|Q|}\int_Qw(x){\rm d}x\Big)\Big(\frac{1}{|Q|}\int_{Q}w^{-\frac{1}{p-1}}(x){\rm d}x\Big)^{p-1}<\infty,$$
where the  supremum is taken over all cubes in $\mathbb{R}^n$, $[w]_{A_p}$ is called the $A_p$ constant of $w$, see \cite{gra} for properties of $A_p(\mathbb{R}^n)$.
In the last several years, there has been significant progress in the study of sharp weighted bounds with $A_p$ weights for the classical operators in Harmonic Analysis. The study was begin by Buckley \cite{bu}, who proved that if $p\in (1,\,\infty)$ and $w\in A_{p}(\mathbb{R}^n)$, then the Hasrdy-Littlewood maximal operator $M$ satisfies
\begin{eqnarray}\|Mf\|_{L^{p}(\mathbb{R}^n,\,w)}\lesssim_{n,\,p}[w]_{A_p}^{\frac{1}{p-1}}\|f\|_{L^{p}(\mathbb{R}^n,\,w)},\end{eqnarray}
Moreover, the estimate (1.1) is sharp since the exponent $1/(p-1)$ can not be replaced by a smaller one. Hyt\"onen and P\'erez \cite{hp} improved the estimate (1.1), and showed that
\begin{eqnarray}\|Mf\|_{L^{p}(\mathbb{R}^n,\,w)}\lesssim_{n,\,p}\big([w]_{A_p}[w^{-\frac{1}{p-1}}]_{A_{\infty}}\big)^{\frac{1}{p}}\|f\|_{L^{p}(\mathbb{R}^n,\,w)}.\end{eqnarray}
where and in the following, for a weight $u$, $[u]_{A_{\infty}}$ is defined by
$$[u]_{A_{\infty}}=\sup_{Q\subset \mathbb{R}^n}\frac{1}{u(Q)}\int_{Q}M(u\chi_Q)(x){\rm d}x.$$
It is well known that for $w\in A_p(\mathbb{R}^n)$, $[w^{-\frac{1}{p-1}}]_{A_{\infty}}\lesssim [w]_{A_p}^{\frac{1}{p-1}}$. Thus, (1.2) is more subtle than (1.1).

The sharp dependence of the weighted estimates of  singular integral operators in terms of the $A_p(\mathbb{R}^n)$ constant  was first considered by Petermichl \cite{pet1,pet2}, who solved this question for Hilbert transform and Riesz transform.   Hyt\"onen \cite{hyt}
proved that  for a  Calder\'on-Zygmund operator $T$ and $w\in A_2(\mathbb{R}^n)$,
\begin{eqnarray}\|Tf\|_{L^{2}(\mathbb{R}^n,\,w)}\lesssim_{n}[w]_{A_2}\|f\|_{L^{2}(\mathbb{R}^n,\,w)}.\end{eqnarray}
This solved  the so-called $A_2$ conjecture. Combining the estimate (1.3) and the extrapolation theorem in \cite{dra}, we know that
for a Calder\'on-Zygmund operator $T$, $p\in (1,\,\infty)$ and $w\in A_p(\mathbb{R}^n)$,
\begin{eqnarray}\|Tf\|_{L^{p}(\mathbb{R}^n,\,w)}\lesssim_{n,\,p}[w]_{A_p}^{\max\{1,\,\frac{1}{p-1}\}}\|f\|_{L^{p}(\mathbb{R}^n,\,w)}.\end{eqnarray}
In \cite{ler2}, Lerner  gave a much simple proof of (1.4) by  controlling the Calder\'on-Zygmund operator using sparse operators.
Lerner \cite{ler2} proved that
\begin{theorem}\label{t1.1} Let $T$ be a sublinear operator and $\mathcal{M}_T$ be the corresponding grand maximal operator defined by
$$\mathcal{M}_Tf(x)=\sup_{Q\ni x}{\rm ess}\sup_{\xi\in Q}|T(f\chi_{\mathbb{R}^n\backslash 3Q})(\xi)|.
$$
Suppose that both $T$ and $\mathcal{M}_T$ are bounded from $L^1(\mathbb{R}^n)$ to $L^{1,\,\infty}(\mathbb{R}^n)$. Then  for $p\in (1,\,\infty)$ and $w\in A_p(\mathbb{R}^n)$, $T$ satisfies (1.4).
\end{theorem}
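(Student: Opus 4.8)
The plan is to follow the sparse-domination method, which isolates the role of $T$ (a pointwise domination by a sparse operator, using $T$ only through the two weak-type hypotheses) from the role of the weight (the sharp bound for sparse operators, which depends only on the $A_p$ geometry). Throughout, write $\langle h\rangle_Q:=\frac{1}{|Q|}\int_Q|h|$, and call a family $\mathcal{S}$ of cubes sparse if there are pairwise disjoint sets $E_Q\subset Q$ with $|E_Q|\ge\frac12|Q|$.

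\emph{Sparse domination.} I would start from Lerner's local mean oscillation formula (see \cite{ler2}): for $g\in L^1_{\mathrm{loc}}$ and a cube $Q_0$ there is a sparse family $\mathcal{S}(Q_0)$ of dyadic subcubes of $Q_0$ with
$$|g(x)-m_g(Q_0)|\le c_n\sum_{Q\in\mathcal{S}(Q_0)}\omega_{\lambda_n}(g;Q)\chi_Q(x)\qquad\text{for a.e. }x\in Q_0,$$
where $m_g(Q_0)$ is a median of $g$ on $Q_0$, $\lambda_n=2^{-n-2}$, and $\omega_\lambda(g;Q)=\inf_{c}\big((g-c)\chi_Q\big)^*(\lambda|Q|)$ is the local mean oscillation, $(\cdot)^*$ the nonincreasing rearrangement. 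Apply this to $g=Tf$ with $f$ bounded and compactly supported; since $Tf\in L^{1,\infty}$ by hypothesis, $m_{Tf}(Q_0)\to0$ along an increasing exhaustion $Q_0\uparrow\mathbb{R}^n$, and after merging the resulting families one obtains a sparse $\mathcal{S}$ with $|Tf(x)|\le c_n\sum_{Q\in\mathcal{S}}\omega_{\lambda_n}(Tf;Q)\chi_Q(x)$ a.e. The crux is the estimate
$$\omega_{\lambda_n}(Tf;Q)\le c_n\big(\|T\|_{L^1\to L^{1,\infty}}+\|\mathcal{M}_T\|_{L^1\to L^{1,\infty}}\big)\langle f\rangle_{3Q}.$$
Using sublinearity, split $f=f\chi_{3Q}+f\chi_{\mathbb{R}^n\setminus3Q}$ and take $c=0$. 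The near part is controlled by Kolmogorov's inequality $h^*(t)\le t^{-1}\|h\|_{L^{1,\infty}}$ and the weak $(1,1)$ bound for $T$, giving $\big(T(f\chi_{3Q})\chi_Q\big)^*(\tfrac{\lambda_n}{2}|Q|)\lesssim_n\|T\|_{L^1\to L^{1,\infty}}\langle f\rangle_{3Q}$. For the far part, the very definition of $\mathcal{M}_T$ (take $Q$ itself as competitor) gives $|T(f\chi_{\mathbb{R}^n\setminus3Q})|\le\mathcal{M}_Tf$ a.e. on $Q$; localizing $f$ to a fixed dilate of $Q$ and invoking the weak $(1,1)$ bound for $\mathcal{M}_T$ together with Kolmogorov once more reduces the far contribution to a multiple of $\|\mathcal{M}_T\|_{L^1\to L^{1,\infty}}\langle f\rangle_{3Q}$. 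This far-part step is where I expect the main obstacle to lie: when the kernel is nonsmooth, $T(f\chi_{\mathbb{R}^n\setminus3Q})$ need not be essentially constant on $Q$, so the classical Calder\'on-Zygmund regularity estimate is unavailable and the grand maximal operator $\mathcal{M}_T$ — whose weak $(1,1)$ boundedness is built into the hypothesis precisely for this purpose — must carry the estimate. Combining everything produces a sparse $\mathcal{S}$ with $|Tf(x)|\le C_n\big(\|T\|_{L^1\to L^{1,\infty}}+\|\mathcal{M}_T\|_{L^1\to L^{1,\infty}}\big)\mathcal{A}_{\mathcal{S}}f(x)$ a.e., where $\mathcal{A}_{\mathcal{S}}f:=\sum_{Q\in\mathcal{S}}\langle f\rangle_{3Q}\chi_Q$; a standard enlargement of cubes replaces $\langle f\rangle_{3Q}$ by $\langle f\rangle_Q$ over a sparse subfamily of a fixed dyadic lattice, so it suffices to bound ordinary sparse operators on $L^p(\mathbb{R}^n,w)$.

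\emph{Weighted bound for sparse operators.} Next I would prove $\|\mathcal{A}_{\mathcal{S}}\|_{L^2(\mathbb{R}^n,w)\to L^2(\mathbb{R}^n,w)}\lesssim_n[w]_{A_2}$ for every sparse $\mathcal{S}$ and every $w\in A_2(\mathbb{R}^n)$. By duality this reduces to estimating $\sum_{Q\in\mathcal{S}}\langle f\rangle_Q\langle g\rangle_Q|Q|$; introducing $\sigma=w^{-1/(2-1)}=w^{-1}$, rewriting the averages with respect to $\sigma\,dx$ and $w\,dx$, and using that $\{\sigma(Q)\}_{Q\in\mathcal{S}}$ and $\{w(Q)\}_{Q\in\mathcal{S}}$ are Carleson sequences with constants $\lesssim[w]_{A_2}$ (an $A_2$--$A_\infty$ computation via the dyadic Carleson embedding theorem) yields the claim; this is the only genuinely quantitative ingredient besides the oscillation estimate, and it is by now classical. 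The sharp extrapolation theorem \cite{dra} then upgrades the $L^2$ bound to $\|\mathcal{A}_{\mathcal{S}}f\|_{L^p(\mathbb{R}^n,w)}\lesssim_{n,p}[w]_{A_p}^{\max\{1,1/(p-1)\}}\|f\|_{L^p(\mathbb{R}^n,w)}$ for all $p\in(1,\infty)$ and $w\in A_p(\mathbb{R}^n)$, uniformly over sparse families.

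\emph{Conclusion.} For $f$ bounded with compact support, combining the pointwise sparse domination with the weighted bound for $\mathcal{A}_{\mathcal{S}}$ gives $\|Tf\|_{L^p(\mathbb{R}^n,w)}\lesssim_{n,p}[w]_{A_p}^{\max\{1,1/(p-1)\}}\|f\|_{L^p(\mathbb{R}^n,w)}$, the constants $\|T\|_{L^1\to L^{1,\infty}}$ and $\|\mathcal{M}_T\|_{L^1\to L^{1,\infty}}$ being finite and independent of $p$ and $n$; since such $f$ are dense in $L^p(\mathbb{R}^n,w)$ and $T$ is in particular of weak type $(1,1)$, a routine limiting argument extends the estimate to all $f\in L^p(\mathbb{R}^n,w)$, which is (1.4). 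The step I anticipate requiring the most care is the far-part oscillation estimate in the sparse-domination argument, where the absence of kernel smoothness has to be absorbed entirely through the grand maximal operator $\mathcal{M}_T$.
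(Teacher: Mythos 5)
Your second and third steps (the sharp $A_2$ bound for sparse operators via Carleson embedding, extrapolation, and the limiting argument) are fine and standard. The problem is the first step, and it is a genuine gap, not a technicality: the key estimate you propose, $\omega_{\lambda_n}(Tf;Q)\le c_nK\langle f\rangle_{3Q}$, is false under the hypotheses of Theorem \ref{t1.1} — indeed it already fails for the Hilbert transform. Take $f$ supported far from $Q$; then $\langle f\rangle_{3Q}=0$, while $Tf=T(f\chi_{\mathbb{R}^n\setminus 3Q})$ has strictly positive local mean oscillation on $Q$ (it is not exactly constant there). The hypotheses only give you $\operatorname{ess\,sup}_{\xi\in Q}|T(f\chi_{\mathbb{R}^n\setminus 3Q})(\xi)|\le \inf_{x\in Q}\mathcal{M}_Tf(x)$, i.e.\ an $L^\infty(Q)$ bound by a pointwise quantity depending on the global $f$; no choice of the subtracted constant $c$ turns this into a per-cube coefficient of size $\langle f\rangle_{3Q}$, and your proposed repair — ``localizing $f$ to a fixed dilate of $Q$'' before applying the weak $(1,1)$ bound for $\mathcal{M}_T$ — is incoherent, because the far part depends only on $f\chi_{\mathbb{R}^n\setminus 3Q}$ and cannot be estimated through any truncation of $f$ near $Q$. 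This is precisely why the classical local-mean-oscillation (median) route works for Calder\'on--Zygmund kernels (where the H\"older tail supplies decay $\sum_j2^{-j\delta}\langle f\rangle_{2^jQ}$) but cannot be run in the kernel-free setting of Theorem \ref{t1.1}.

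Note also that the paper does not prove Theorem \ref{t1.1}; it quotes it from Lerner \cite{ler2}, and Lerner's actual argument there is structurally different from yours in exactly the place where your proof breaks. It is a recursive stopping-time construction: fix a cube $Q_0$ containing the support of $f$, observe that the set $E=\{x\in Q_0:\ |T(f\chi_{3Q_0})(x)|+\mathcal{M}_T(f\chi_{3Q_0})(x)>c_nK\langle f\rangle_{3Q_0}\}$ has measure at most $2^{-n-2}|Q_0|$ by the two weak $(1,1)$ hypotheses, select maximal dyadic subcubes $P_j$ of $Q_0$ covering $E$ with $\sum_j|P_j|\le\frac12|Q_0|$ and $P_j\not\subset E$, bound $|T(f\chi_{3Q_0})|$ off $\cup_jP_j$ directly, and on each $P_j$ split $f\chi_{3Q_0}=f\chi_{3P_j}+f\chi_{3Q_0\setminus 3P_j}$, controlling the second piece by $\mathcal{M}_T(f\chi_{3Q_0})$ evaluated at a point of $P_j$ outside the level set; then iterate on $T(f\chi_{3P_j})\chi_{P_j}$. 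The essential point you are missing is that $\mathcal{M}_T$ must always be applied to the \emph{localized} function $f\chi_{3Q_0}$ of the current generation, and the smallness on each stopping cube comes from the maximality of the selection, not from a per-cube oscillation estimate for the global $Tf$. With that sparse domination in hand, your sparse/extrapolation endgame goes through verbatim.
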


Let $p,\,r\in(0,\,\infty]$ and $w$ be a weight. As usual, for a sequence of numbers $\{a_k\}_{k=1}^{\infty}$, we denote $\|\{a_k\}\|_{l^r}=\big(\sum_k|a_k|^r\big)^{1/r}$. The space $L^p(l^{r};\,\mathbb{R}^n,\,w)$ is defined as
$$L^p(l^{r};\,\mathbb{R}^n,\,w)=\big\{\{f_k\}_{k=1}^{\infty}:\, \|\{f_k\}\|_{L^p(l^r;\,\mathbb{R}^n,\,w)}<\infty\big\}$$
where
$$\|\{f_k\}\|_{L^p(l^r;\,\mathbb{R}^n,\,w)}=\Big(\int_{\mathbb{R}^n}\|\{f_k(x)\}\|_{l^r}^pw(x)\,{\rm d}x\Big)^{1/p}.$$
When $w\equiv 1$, we denote  $\|\{f_k\}\|_{L^p(l^r;\,\mathbb{R}^n,\,w)}$
by $\|\{f_k\}\|_{L^p(l^r;\,\mathbb{R}^n)}$ for simplicity. Hu \cite{hu} extended Lerner's result to the  vector-valued case,  proved that
\begin{theorem}\label{t1.2} Let $T$ be a sublinear operator and $\mathcal{M}_T$ be the corresponding grand maximal operator.
Suppose that for some $q\in (1,\,\infty)$, \begin{eqnarray*}&&
\big|\big\{y\in \mathbb{R}^n:\,\|\{Tf_k(y)\}\|_{l^q}+\|\{\mathcal{M}_{T}f_k(y)\}\|_{l^q}>\lambda\big\}\big|\\
&&\quad\lesssim\int_{\mathbb{R}^n}\frac{\|\{f_k(y)\}\|_{l^q}}{\lambda}\log \Big(1+\frac{\|\{f_k(y)\}\|_{l^q}}{\lambda}\Big)dy.\nonumber\end{eqnarray*}Then for
all $p\in (1,\,\infty)$ and $w\in A_p(\mathbb{R}^n)$, $$\big\|\{Tf_k\}\big\|_{L^p(l^q;\mathbb{R}^n,w)}\lesssim_{n,\,p} [w]_{A_p}^{\frac{1}{p}}\big([w^{-\frac{1}{p-1}}]_{A_{\infty}}^{\frac{1}{p}}+[w]_{A_{\infty}}^{\frac{1}{p'}}\big)[w^{-\frac{1}{p-1}}]_{A_{\infty}}^{\beta}
\|\{f_k\}\|_{L^p(l^q;\,\mathbb{R}^n,\,w)}.$$
\end{theorem}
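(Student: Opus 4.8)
The plan is to pass from the endpoint hypothesis to a sparse domination and then to prove a sharp weighted estimate for the resulting $L\log L$ sparse operator. Throughout, write $g=\|\{f_k(\cdot)\}\|_{l^q}$, and for a cube $Q$ let $\|g\|_{L\log L,Q}$ denote the normalized Luxemburg average of $g$ over $Q$ associated with the Young function $\Phi(t)=t\log(e+t)$; recall $\|g\|_{L\log L,Q}\approx |Q|^{-1}\int_Q M(g\chi_Q)$.

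\emph{Step 1 (sparse domination).} I would first show that for every $\{f_k\}$ with each $f_k$ bounded and compactly supported there are finitely many sparse families $\mathcal S_1,\dots,\mathcal S_{3^n}$, one per shifted dyadic grid, such that
$$\big\|\{Tf_k(x)\}\big\|_{l^q}\le C_{n,q}\sum_{j}\sum_{Q\in\mathcal S_j}\|g\|_{L\log L,\,Q}\,\chi_Q(x)\qquad\text{a.e.}$$
This is the Lerner--Lacey stopping-time construction adapted to the weaker ($L\log L$ rather than weak-$(1,1)$) endpoint: inside a fixed cube $Q_0\supset\mathrm{supp}\,f$ one runs a Calder\'on--Zygmund stopping procedure with threshold a large multiple of $\|g\|_{L\log L,3Q_0}$; the hypothesis on $T$ shows that the set where $\|\{T(f_k\chi_{3Q_0})\}\|_{l^q}$ exceeds that threshold has measure $\le 2^{-n-2}|Q_0|$, while the hypothesis on $\mathcal M_T$ controls, uniformly over the stopping cubes, the non-local contributions $T(f_k\chi_{\mathbb R^n\setminus 3Q})$ that arise when one iterates the construction on the stopping cubes. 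The output family is sparse. Hence it suffices to estimate $\mathcal A^{L\log L}_{\mathcal S}g:=\sum_{Q\in\mathcal S}\|g\|_{L\log L,Q}\chi_Q$ in $L^p(w)$.

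\emph{Step 2 (weighted bound for the $L\log L$ sparse operator).} Set $\sigma=w^{-1/(p-1)}$. I would establish
$$\big\|\mathcal A^{L\log L}_{\mathcal S}g\big\|_{L^p(w)}\lesssim_{n,p}[w]_{A_p}^{1/p}\big([\sigma]_{A_\infty}^{1/p}+[w]_{A_\infty}^{1/p'}\big)[\sigma]_{A_\infty}^{\beta}\,\|g\|_{L^p(w)}.$$
By duality it is enough to bound $\sum_{Q\in\mathcal S}\|g\|_{L\log L,Q}\,\langle hw\rangle_Q\,|Q|$ for $h\ge0$ with $\|h\|_{L^{p'}(w)}\le1$. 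On each $Q$ I would write $g=(g\sigma^{-1})\cdot\sigma$ and use the generalized H\"older inequality for Orlicz averages together with the sharp reverse H\"older inequality for the $A_\infty$ weight $\sigma$, in the quantitative form $\|\sigma\|_{L(\log L)^{m},Q}\lesssim [\sigma]_{A_\infty}^{m}\langle\sigma\rangle_Q$, to absorb the logarithmic loss inherent in $L\log L$ into the factor $[\sigma]_{A_\infty}^{\beta}$ and reduce to plain $\sigma$- and $w$-averages, namely to a sum of the form
$$[\sigma]_{A_\infty}^{\beta}\sum_{Q\in\mathcal S}\big(\inf_Q M^\sigma_{\mathcal S}(g\sigma^{-1})\big)\big(\inf_Q M^w_{\mathcal S}h\big)\,\langle\sigma\rangle_Q\langle w\rangle_Q\,|Q|.$$
Using $\langle\sigma\rangle_Q\langle w\rangle_Q|Q|\le[w]_{A_p}^{1/p}\,w(Q)^{1/p'}\sigma(Q)^{1/p}$, H\"older in the summation with exponents $p,p'$, and the precise two-weight Carleson embedding for sparse families against $w$ and against $\sigma$ (with $A_\infty$ constants $\lesssim[w]_{A_\infty}$ and $\lesssim[\sigma]_{A_\infty}$, and with the usual splitting trick that turns a product of $A_\infty$ factors into the sum $[\sigma]_{A_\infty}^{1/p}+[w]_{A_\infty}^{1/p'}$), one recovers the stated constant. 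Combining Steps 1 and 2 gives the inequality for bounded compactly supported $\{f_k\}$, and a routine truncation/monotone convergence argument extends it to all $\{f_k\}\in L^p(l^q;\mathbb R^n,w)$.

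\emph{Main obstacle.} The crux is Step 2. Since $\|g\|_{L\log L,Q}$ is genuinely larger than $\langle g\rangle_Q$, one cannot simply invoke the known sharp bound for ordinary sparse operators; the extra logarithm has to be redistributed without degrading the sharp factor $[w]_{A_p}^{1/p}\big([\sigma]_{A_\infty}^{1/p}+[w]_{A_\infty}^{1/p'}\big)$, and handling it through the quantitative reverse H\"older inequality for $\sigma$ is precisely what forces — and, carried out carefully, quantifies — the additional power $[\sigma]_{A_\infty}^{\beta}$. A secondary point requiring care is Step 1: verifying that the family produced by the $L\log L$ (rather than $L^1$) stopping condition is sparse, and that in the vector-valued setting $\mathcal M_T$ really does absorb the iterated tails; this is by now standard but must be checked.
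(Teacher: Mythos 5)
Theorem \ref{t1.2} is not proved in this paper at all --- it is quoted from \cite{hu} --- so I can only compare your proposal with the natural (and presumably the cited) argument. Your overall architecture is the right one: an $L\log L$ sparse domination obtained from the endpoint hypotheses on $T$ and $\mathcal M_T$ via Lerner's stopping-time scheme, followed by an $A_p$--$A_\infty$ weighted bound for the sparse operator with $L\log L$ averages; your Step 1 is essentially sound. The genuine gap is in Step 2, exactly at the point you yourself call the crux. The reduction you propose,
\[
\sum_{Q\in\mathcal S}\|g\|_{L\log L,Q}\langle hw\rangle_Q|Q|\ \lesssim\ [\sigma]_{A_\infty}^{\beta}\sum_{Q\in\mathcal S}\Big(\inf_Q M^{\sigma}_{\mathcal S}(g\sigma^{-1})\Big)\Big(\inf_Q M^{w}_{\mathcal S}h\Big)\langle\sigma\rangle_Q\langle w\rangle_Q|Q|,
\]
is false in general, so no combination of the generalized H\"older inequality and the sharp reverse H\"older inequality can yield it. Test it with $w=\sigma\equiv1$ (so $[\sigma]_{A_\infty},[w]_{A_\infty}\approx1$ and $[\sigma]_{A_\infty}^{\beta}\approx 1$), $\mathcal S=\{Q_0\}$, $h=\chi_{Q_0}$, $g=\chi_E$ with $E\subset Q_0$ small: the left-hand side is $\approx|E|\log(e+|Q_0|/|E|)$ while the right-hand side is $\approx|E|$, and the ratio is unbounded as $|E|/|Q_0|\to0$. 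The point is that once you pass to \emph{plain} $\sigma$-averages (or infima of $M^{\sigma}(g\sigma^{-1})$) you have already discarded the logarithm, and when $\sigma$ is a nice weight there is no $A_\infty$ factor left to pay for it. The quantitative estimate $\|\sigma\|_{L(\log L)^m,Q}\lesssim[\sigma]_{A_\infty}^m\langle\sigma\rangle_Q$ is correct, but it applies to $\sigma$ itself; there is no Orlicz H\"older inequality that transfers the whole logarithm from $g=(g\sigma^{-1})\sigma$ onto $\sigma$ while leaving only $\langle g\sigma^{-1}\rangle^{\sigma}_Q$ on the other factor.

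To repair Step 2 you must keep the logarithmic bump attached to $g\sigma^{-1}$, not eliminate it: for instance, after writing $g=(g\sigma^{-1})\sigma$, aim for an estimate in which $\|g\|_{L\log L,Q}$ is controlled by $[\sigma]_{A_\infty}\langle\sigma\rangle_Q$ times an Orlicz (or $L^r(\sigma)$ with $r>1$ depending only on $p$) average of $g\sigma^{-1}$ with respect to the measure $\sigma\,dx$; such averages are still handled by the $\sigma$-weighted (Orlicz) maximal operator on $L^p(\sigma)$ with constants independent of $\sigma$, and then your $A_p$ factorization $\langle\sigma\rangle_Q\langle w\rangle_Q|Q|\le[w]_{A_p}^{1/p}\sigma(Q)^{1/p}w(Q)^{1/p'}$, the H\"older step, the sparse Carleson embeddings with constants $[\sigma]_{A_\infty}$ and $[w]_{A_\infty}$, and the splitting trick produce the stated bound with $\beta=1$ (consistent with Corollary \ref{t1.4}). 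Alternatively one can exploit $\|g\|_{L\log L,Q}\approx\langle M(g\chi_Q)\rangle_Q$ together with a principal-cubes (stopping family) argument on the $\sigma$ side. As written, however, the key weighted estimate for $\mathcal A^{L\log L}_{\mathcal S}$ is not proved, so the proposal is incomplete at its decisive step.
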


Let $T$ be a $L^2(\mathbb{R}^n)$ bounded linear operator with kernel $K$ in the sense that
for all $f\in L^2(\mathbb{R}^n)$ with compact support and a. e. $x\in\mathbb{R}^n\backslash {\rm supp}\, f$,
\begin{eqnarray}Tf(x)=\int_{\mathbb{R}^n}K(x,\,y)f(y)dy.\end{eqnarray}
where $K$ is a measurable function on $\mathbb{R}^n\times \mathbb{R}^n\backslash\{(x,\,y):\,x=y\}$. To obtain a weak
$(1,\,1)$ estimate for certain Riesz transforms, and $L^p$ boundedness with $p\in (1,\,\infty)$ of
holomorphic functional calculi of linear elliptic operators on irregular domains, Duong
and McIntosh \cite{duongmc} introduced singular integral operators with nonsmooth kernels on
spaces of homogeneous type via the following generalized approximation to the
identity.

\begin{definition}
A family of operators $\{A_t\}_{t>0}$ is said to be an approximation to the identity, if for
every $t>0$, $A_t$ can be represented by the kernel at in the following sense: for every function $u\in L^p(\mathbb{R}^n)$
with $p\in [1,\,\infty]$ and almost everywhere $x\in\mathbb{R}^n$,
$$A_tu(x)=\int_{\mathbb{R}^n}a_t(x,\,y)u(y)dy,$$
and the kernel $a_t$ satisfies that for all $x,\,y\in\mathbb{R}^n$ and $t>0$,
\begin{eqnarray}|a_t(x,\,y)|\le h_t(x,\,y)=t^{-n/s}h\Big(\frac{|x-y|}{t^{1/s}}\Big),\end{eqnarray}
where $s>0$ is a constant and $h$ is a positive, bounded and decreasing function such that for some constant $\eta>0$,
\begin{eqnarray}\lim_{r\rightarrow\infty}r^{n+\eta}h(r)=0.\end{eqnarray}
\end{definition}
\begin{assumption}\label{a1.0}
There exists an approximation to the identity $\{A_t\}_{t>0}$ such that the composite
operator $TA_t$  has an associated kernel $K_t$ in the sense of (1.6), and there
exists a positive constant $c_1$  such that for all $y\in \mathbb{R}^n$ and $t>0$,
$$\int_{|x-y|\geq c_1t^{\frac{1}{s}}}K(x,\,y)-K_t(x,\,y)|dx\lesssim 1.$$\end{assumption}
An $L^2(\mathbb{R}^n)$ bounded linear operator with kernel $K$ satisfying Assumption \ref{a1.0} is called a singular
integral operator with nonsmooth kernel, since $K$ does not enjoy smoothness in space
variables. Duong and McIntosh \cite{duongmc} proved that if $T$ is an $L^2(\mathbb{R}^n)$ bounded linear operator with
kernel $K$, and satisfies Assumption \ref{a1.0}, then $T$ is bounded from $L^1(\mathbb{R}^n)$ to $L^{1,\,\infty}(\mathbb{R}^n)$.
To consider the weighted estimates with $A_p(\mathbb{R}^n)$ boundedness of singular integral operators with non-smooth kernel, Martell \cite{mar} introduced the following assumptions.
\begin{assumption}\label{a1.1} There exists an approximation to the identity $\{D_t\}_{t>0}$ such that the composite
operator $D_tT$  has an associated kernel $K^t$ in the sense of (1.6), and there exist  positive
constants $c_2$ and $\alpha\in (0,\,1]$,  such that for all $t>0$ and $x,\,y\in\mathbb{R}^n$ with $|x-y|\geq c_2t^{\frac{1}{s}}$,
\begin{eqnarray*}&&|K(x,\,y)-K^t(x,\,y)|\lesssim\frac{t^{\alpha/s}}{|x-y|^{n+\alpha}}.
\end{eqnarray*}
\end{assumption}

Martell \cite{mar}
proved that if $T$ is an $L^2(\mathbb{R}^n)$ bounded linear operator, satisfies Assumption \ref{a1.0} and
Assumption \ref{a1.1}, then for any $p\in (1,\,\infty)$ and $w\in A_p(\mathbb{R}^n)$, $T$ is bounded on $L^p(\mathbb{R}^n,\,w)$.
The first purpose of this paper is to establish the endpoint vector-valued estimates for the corresponding grand maximal operator of singular integral operators with nonsmooth kernels. Our main result can be stated as follows.
\begin{theorem}\label{t1.3}Let T be an $L^2(\mathbb{R}^n)$ bounded linear operator with kernel $K$ as in
(1.5). Suppose that T satisfies Assumption \ref{a1.0} and   Assumption \ref{a1.1}. Then for each $\lambda>0$,
\begin{eqnarray}&&\big|\{x\in\mathbb{R}^n:\|\{Tf_k(x)\}\|_{l^q}+\|\{\mathcal{M}_{T}f_k(x)\}\|_{l^q}>\lambda\}\big|\\
&&\quad\lesssim \int_{\mathbb{R}^n}\frac{\|\{f_k(x)\}\|_{l^q}}{\lambda}\log\big(1+\frac{\|\{f_k(x)\}\|_{l^q}}{\lambda}\big)dx.\nonumber\end{eqnarray}
If we further assume that the kernels $\{K^t\}_{t>0}$ in  Assumption \ref{a1.1} also satisfy that for all $t >0$ and $x,\,y\in\mathbb{R}^n$ with
$|x-y|\leq c_2t^{\frac{1}{s}}$,
\begin{eqnarray}|K^t(x,\,y)|\lesssim t^{-\frac{n}{s}},\end{eqnarray} then (1.8) is also true for $T^*$, here and in the following, $T^*$ is the maximal singular integral operator defined by $$T^*f(x)=\sup_{\epsilon>0}|T_{\epsilon}f(x)|,$$
with $$T_{\epsilon}f(x)=\int_{|x-y|>\epsilon}K(x,\,y)f(y)dy.
$$
\end{theorem}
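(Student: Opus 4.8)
The plan is to deduce the $L\log L$ estimate (1.8) from two cleaner facts: the $L^{2}(l^q;\mathbb{R}^n)$-boundedness of the operators involved, and a genuine vector-valued weak type $(1,1)$ bound; the $L\log L$ gauge then comes out automatically. By homogeneity we may take $\lambda=1$, and by monotone convergence it suffices to treat finite sequences $\{f_k\}_{k=1}^{N}$ with bounds independent of $N$. Writing $g=\|\{f_k\}\|_{l^q}$ and $\Phi(t)=t\log(1+t)$, split $f_k=f_k\chi_{\{g\le 1\}}+f_k\chi_{\{g>1\}}=:f_k^{(1)}+f_k^{(2)}$. Since $t^{2}\lesssim\Phi(t)$ on $[0,1]$ and $t\le\Phi(t)/\log 2$ on $[1,\infty)$, one has $\|\{f_k^{(1)}\}\|_{L^{2}(l^q;\mathbb{R}^n)}^{2}\lesssim\int_{\mathbb{R}^n}\Phi(g)$ and $\|\{f_k^{(2)}\}\|_{L^{1}(l^q;\mathbb{R}^n)}\lesssim\int_{\mathbb{R}^n}\Phi(g)$. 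Hence, letting $S$ denote $T$, $\mathcal{M}_T$, or $T^{*}$, it is enough to prove \textbf{(a)} $S$ is bounded on $L^{2}(l^q;\mathbb{R}^n)$ (which disposes of $f^{(1)}$ via Chebyshev), and \textbf{(b)} $\|\{Sf_k^{(2)}\}\|_{L^{1,\infty}(l^q;\mathbb{R}^n)}\lesssim\int_{\mathbb{R}^n}\Phi(g)$.

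For (a), by Martell's theorem \cite{mar} the operator $T$ is bounded on $L^{p}(\mathbb{R}^n,w)$ for all $p\in(1,\infty)$ and $w\in A_p(\mathbb{R}^n)$, and the same holds for $\mathcal{M}_T$. For $T^{*}$, under (1.9), I would first establish the pointwise bound $T^{*}f\lesssim M(Tf)+Mf$: with $t=(\varepsilon/c_2)^{s}$ one has $T_\varepsilon f(x)-D_tTf(x)=\int_{|x-y|>\varepsilon}(K(x,y)-K^{t}(x,y))f(y)\,dy-\int_{|x-y|\le\varepsilon}K^{t}(x,y)f(y)\,dy$, where the first integral is $\lesssim\varepsilon^{\alpha}\int_{|x-y|>\varepsilon}|f(y)|\,|x-y|^{-n-\alpha}\,dy\lesssim Mf(x)$ by Assumption \ref{a1.1}, the second is $\lesssim\varepsilon^{-n}\int_{|x-y|\le\varepsilon}|f|\lesssim Mf(x)$ by (1.9), and $|D_tTf(x)|\le\int h_t(x,y)|Tf(y)|\,dy\lesssim M(Tf)(x)$ by (1.6)--(1.7); thus $T^{*}$ is also bounded on every $L^{p}(\mathbb{R}^n,w)$ with $w\in A_p$. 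Having scalar boundedness for the whole $A_p$ class, Rubio de Francia's vector-valued extrapolation theorem gives boundedness on $L^{p}(l^q;\mathbb{R}^n)$ for all $p,q\in(1,\infty)$, in particular for $p=2$.

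For (b) I would first treat $S=T$. Put $h=\|\{f_k^{(2)}\}\|_{l^q}\in L^{1}(\mathbb{R}^n)$ and form its Calder\'on--Zygmund decomposition at height $1$: disjoint cubes $\{Q_j\}$, $\Omega=\bigcup_jQ_j$, with $1<|Q_j|^{-1}\int_{Q_j}h\le 2^{n}$, $\sum_j|Q_j|\lesssim\|h\|_{L^1}$, and $h\le 1$ a.e.\ on $\mathbb{R}^n\setminus\Omega$; set $\Omega^{*}=\bigcup_jC_0\sqrt{n}\,Q_j$ with $C_0$ large, so $|\Omega^{*}|\lesssim\|h\|_{L^1}$. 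Decompose $f_k^{(2)}=v_k+\sum_jw_{k,j}$, $v_k=f_k^{(2)}\chi_{\Omega^{c}}+\sum_j\langle f_k^{(2)}\rangle_{Q_j}\chi_{Q_j}$, $w_{k,j}=(f_k^{(2)}-\langle f_k^{(2)}\rangle_{Q_j})\chi_{Q_j}$, where $\langle\cdot\rangle_{Q_j}$ is the average over $Q_j$; each $w_{k,j}$ has zero average in every coordinate and is supported in $Q_j$. By Minkowski's inequality $\|\{v_k\}\|_{L^{\infty}(l^q)}\lesssim 1$ and $\|\{v_k\}\|_{L^{1}(l^q)}\lesssim\|h\|_{L^1}$, so $\|\{v_k\}\|_{L^{2}(l^q)}^{2}\lesssim\|h\|_{L^1}$ and (a) plus Chebyshev dispatch $\{v_k\}$. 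For $\{w_{k,j}\}$ one discards $\Omega^{*}$ and, on $\mathbb{R}^n\setminus\Omega^{*}$, writes $Tw_{k,j}=T(I-A_{t_j})w_{k,j}+TA_{t_j}w_{k,j}$ with $t_j=(\ell(Q_j))^{s}$. Since $|x-y|\ge c_1t_j^{1/s}$ for $x\notin\Omega^{*}$, $y\in Q_j$, the operator $T(I-A_{t_j})$ acts there with kernel $K-K_{t_j}$, and Assumption \ref{a1.0} yields
\[
\int_{\mathbb{R}^n\setminus\Omega^{*}}\|\{T(I-A_{t_j})w_{k,j}(x)\}\|_{l^q}\,dx\le\int_{Q_j}\|\{w_{k,j}(y)\}\|_{l^q}\Big(\int_{|x-y|\ge c_1t_j^{1/s}}|K(x,y)-K_{t_j}(x,y)|\,dx\Big)dy\lesssim\|\{w_{k,j}\}\|_{L^{1}(l^q)},
\]
which sums over $j$ to $\lesssim\|h\|_{L^1}$. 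For $TA_{t_j}w_{k,j}$, estimate (1.6) makes $A_{t_j}w_{k,j}$, up to a bounded constant, a function with $\|\{A_{t_j}w_{k,j}\}\|_{L^{\infty}(l^q)}\lesssim 1$ and $\|\{A_{t_j}w_{k,j}\}\|_{L^{1}(l^q)}\lesssim|Q_j|$, concentrated near $Q_j$ with the decay (1.7); combining this with the $L^{2}(l^q)$-boundedness of $T$ and the almost orthogonality of $\{A_{t_j}w_{k,j}\}_j$ (a consequence of (1.7)) gives $\|\{\sum_jTA_{t_j}w_{k,j}\}\|_{L^{2}(l^q)}^{2}\lesssim\sum_j|Q_j|\lesssim\|h\|_{L^1}$, and Chebyshev finishes. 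Running the same argument at an arbitrary height $c\lambda$ also yields the localized bound $|\{\|\{Tf_k\}\|_{l^q}>\lambda\}|\lesssim\lambda^{-1}\int_{\{\|\{f_k\}\|_{l^q}>\lambda/2\}}\|\{f_k\}\|_{l^q}$.

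For $S=\mathcal{M}_T$ and $S=T^{*}$ I would use the same decomposition; $\{v_k\}$ is again handled by (a), and for $\{w_{k,j}\}$ on $\mathbb{R}^n\setminus\Omega^{*}$ one reduces, pointwise, to $M\circ T$ plus harmless corrections: for each cube $Q$ relevant to $\mathcal{M}_T$ (resp.\ each truncation $\varepsilon$) split the kernel as $K=(K-K^{t})+K^{t}$ with $t$ comparable to $(\ell(Q))^{s}$ (resp.\ to $\varepsilon^{s}$); the $(K-K^{t})$-part is estimated by Assumption \ref{a1.1}, producing a tail $\lesssim\sum_j(\ell(Q_j))^{\alpha}\big(\ell(Q_j)+\mathrm{dist}(\cdot,Q_j)\big)^{-n-\alpha}\int_{Q_j}\|\{w_{k,j}\}\|_{l^q}$ that integrates over $\mathbb{R}^n\setminus\Omega^{*}$ to $\lesssim\sum_j|Q_j|\lesssim\|h\|_{L^1}$, while the $K^{t}$-part equals a $D_t$-average of $Tw_{k,j}$, hence is $\lesssim M(Tw_{k,j})$ by (1.6)--(1.7) (for $T^{*}$ the near-diagonal piece of the truncation is in addition controlled by (1.9)). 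It then remains to bound the $M\circ T$ contribution, which I would do by combining the Fefferman--Stein inequality $\|\{Mg_k\}\|_{L^{1,\infty}(l^q)}\lesssim\|\{g_k\}\|_{L^{1}(l^q)}$ (and its localized form) with the weak $(1,1)$ bound for $T$ proved above: composing two localized weak $(1,1)$ estimates turns the $L^{1}(l^q)$ datum $\{f_k^{(2)}\}$ into exactly the quantity $\int\Phi(g)$, by the same mechanism through which $M\circ M$ maps $L\log L$ into $L^{1,\infty}$. The hardest part is this last, $\mathcal{M}_T$ and $T^{*}$ step: one must carry the $l^q$-norm through the Calder\'on--Zygmund splitting, choose $\Omega^{*}$ so that the outer cube $Q$ (resp.\ the parameter $\varepsilon$) never meets a Calder\'on--Zygmund cube $Q_j$ only partially, and assemble the $M\circ T$ estimate so that its output is genuinely the $L\log L$ quantity rather than something larger.
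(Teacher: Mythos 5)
Your reduction to (a) an $L^{2}(l^q;\mathbb{R}^n)$ bound plus (b) a weak-type bound for the part of $f$ supported where $\|\{f_k\}\|_{l^q}>1$ is sound, and your treatment of $T$ itself (Calder\'on--Zygmund decomposition, regularizing the bad parts by $A_{t_j}$, Assumption 1.3 for $T(I-A_{t_j})w_{k,j}$ off $\Omega^{*}$, $L^{2}(l^q)$ or duality with $M$ for the regularized parts) is essentially the paper's Lemma 2.4 (the vector-valued weak $(1,1)$ bound for $T$). The genuine gap is in the step you yourself flag as the hardest: the $M\circ T$ contribution needed for $\mathcal{M}_T$ and $T^{*}$. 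The claim that ``composing two localized weak $(1,1)$ estimates turns the $L^{1}(l^q)$ datum into $\int\Phi(g)$, by the same mechanism through which $M\circ M$ maps $L\log L$ into $L^{1,\infty}$'' is not a proof, and the proposed mechanism fails as stated: $M$ does not map $L^{1,\infty}$ into $L^{1,\infty}$ (already $M(Hf)(x)\gtrsim |x|^{-1}\log|x|$ for a bump $f$ and the Hilbert transform $H$, which is why $\mathcal{M}_T$ is only of weak $L\log L$ type), so the Fefferman--Stein weak $(1,1)$ inequality for $M$ cannot be fed with $T$ of the bad parts; moreover, on and near $\Omega^{*}$ the quantity $\|\{T(I-A_{t_j})w_{k,j}\}\|_{l^q}$ is not $L^{1}$-controlled (Assumption 1.3 gives the kernel integral bound only off a dilate of $Q_j$), and $M$ transports exactly this uncontrolled mass back to $\mathbb{R}^n\setminus\Omega^{*}$. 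The identity $M\circ M\approx M_{L\log L}$ is a pointwise self-improvement special to $M$; there is no pointwise control of $M(\|\{Tf_k\}\|_{l^q})$ by maximal functions of $f$. This composition estimate is precisely the paper's Lemma 2.7, and the paper proves it with genuinely different machinery: the local $L\log L$ estimate of Lemma 2.1 (weak type plus $L^{p_0}$ boundedness gives $\frac{1}{|Q_1|}\int_{Q_1}\|\{S(f_k\chi_{Q_2})\}\|_{l^q}\lesssim\big\|\|\{f_k\}\|_{l^q}\big\|_{L(\log L),Q_2}$), Martell's sharp maximal operator $M^{\sharp}_{D}$ with the good-$\lambda$ inequality (Lemma 2.2 and Corollary 2.3), the pointwise bound $M^{\sharp}_{D}(\{Tf_k\})\lesssim M_{L\log L}(\|\{f_k\}\|_{l^q})+\|\{Mf_k\}\|_{l^q}$ (Lemma 2.5), and a local mean oscillation estimate for $M_{\mathscr D}$. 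Nothing in your sketch substitutes for this block, and it is exactly where the logarithmic factor is produced.

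Two further points in the same direction. First, in step (a) you assert that the weighted $L^{p}(w)$ bounds ``hold as well for $\mathcal{M}_T$'' before extrapolating; this is true but not free --- the natural route is the pointwise inequality $\mathcal{M}_Tf\lesssim M(Tf)+M_{L\log L}f$, and its proof requires controlling the term $D_{t_Q}T(f\chi_{3Q})(\xi)$, which again needs the local $L\log L$ estimate of Lemma 2.1; in your outline for $\mathcal{M}_T$ and $T^{*}$ this term is subsumed under ``harmless corrections,'' but it cannot be dismissed (it is not pointwise bounded by $Mf$ or by $M(Tf)$, only by $M_{L\log L}f$). Second, the geometric caveat you mention (cubes $3Q$ meeting a Calder\'on--Zygmund cube $Q_j$ only partially) is repairable --- partial overlap forces $\ell(Q)\gtrsim\ell(Q_j)$, so $Q_j$ is swallowed by a fixed dilate of $Q$ --- but it is another place where your argument is a plan rather than a proof. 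In short: the $T$ part and the Cotlar-type reductions match the paper, but the central endpoint estimate for the composition with $M$ is missing, and the mechanism you propose for it does not work.
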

As a consequence of Theorem \ref{t1.3} and Theorem \ref{t1.1}, we obtain the following weighted vector-valued bounds for $T$ and $T^*$.

\begin{corollary}\label{t1.4}
Let T be an $L^2(\mathbb{R}^n)$ bounded linear operator with kernel $K$  in the sense of (1.9). Suppose that $T$ satisfies Assumption \ref{a1.0} and   Assumption \ref{a1.1}. Then
for $p\in (1,\,\infty)$ and $w\in A_{p}(\mathbb{R}^n)$,
\begin{eqnarray}&&\big\|\{Tf_k\}\big\|_{L^p(l^q;\,\mathbb{R}^n,w)}\lesssim_{n,\,p} [w]_{A_p}^{\frac{1}{p}}\big([\sigma]_{A_{\infty}}^{\frac{1}{p}}+[w]_{A_{\infty}}^{\frac{1}{p'}}\big)[\sigma]_{A_{\infty}}
\|\{f_k\}\|_{L^p(l^q;\,\mathbb{R}^n,\,w)},\end{eqnarray}
with $\sigma=w^{-\frac{1}{p-1}}$. Moreover, if the kernels $\{K^t\}_{t>0}$ in  Assumption \ref{a1.1}  satisfy (1.9), then the weighted estimate (1.10) also holds for $T^*$.
\end{corollary}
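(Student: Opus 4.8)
The plan is to obtain Corollary \ref{t1.4} by combining the weak type endpoint estimate of Theorem \ref{t1.3} with the weighted vector-valued bound of Theorem \ref{t1.2}. Fix $q\in(1,\infty)$. Since $T$ is a bounded linear operator on $L^2(\mathbb{R}^n)$ it is in particular sublinear, and as it satisfies Assumptions \ref{a1.0} and \ref{a1.1}, the first assertion of Theorem \ref{t1.3} shows that $T$ together with its grand maximal operator $\mathcal{M}_T$ satisfies the weak type $L\log L$ vector-valued inequality (1.8) for this $q$. This is precisely the hypothesis required to invoke Theorem \ref{t1.2}.

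Applying Theorem \ref{t1.2} with this $q$ then yields, for every $p\in(1,\infty)$ and $w\in A_p(\mathbb{R}^n)$ with $\sigma=w^{-1/(p-1)}$, the estimate (1.10) for $T$; here one uses, if necessary, the elementary fact that $[\sigma]_{A_\infty}\ge1$ for every weight, so that the exponent $\beta$ appearing in Theorem \ref{t1.2} (necessarily at most $1$) may be raised to $1$, giving the precise form of (1.10).

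For the maximal singular integral operator $T^*f=\sup_{\epsilon>0}|T_\epsilon f|$, which is again sublinear, the same scheme applies once the endpoint estimate is in hand: under the additional hypothesis (1.9) on the kernels $\{K^t\}_{t>0}$, the second assertion of Theorem \ref{t1.3} furnishes the estimate (1.8) with $T$ and $\mathcal{M}_T$ replaced by $T^*$ and $\mathcal{M}_{T^*}$, and a further application of Theorem \ref{t1.2}, now to $T^*$, delivers (1.10) for $T^*$. In short, the corollary is a formal consequence of the two theorems: essentially all of the analytic content---the $L\log L$ endpoint bound for the grand maximal operator of a singular integral with nonsmooth kernel, and, under (1.9), for its maximal truncation---is carried by Theorem \ref{t1.3}, while the passage from such an endpoint bound to the sharp weighted vector-valued inequality is exactly Theorem \ref{t1.2}. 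Consequently there is no genuine obstacle at this stage; the only points requiring (routine) attention are that $T$ and $T^*$ genuinely fit the abstract framework of Theorem \ref{t1.2}---sublinearity and the weak endpoint estimate in the precise form involving the corresponding grand maximal operator, both guaranteed by Theorem \ref{t1.3}---and the harmless use of $[\sigma]_{A_\infty}\ge1$ noted above.
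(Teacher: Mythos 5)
Your proposal is correct and follows essentially the same route as the paper, which simply derives the corollary by feeding the weak type $L\log L$ vector-valued estimate of Theorem \ref{t1.3} (for $T$, $\mathcal{M}_T$, and, under (1.9), for $T^*$, $\mathcal{M}_{T^*}$) into the abstract weighted bound of Theorem \ref{t1.2} (the paper's citation of Theorem \ref{t1.1} is evidently meant to be the vector-valued version, as you use). Your remark on normalizing the exponent via $[\sigma]_{A_\infty}\ge 1$ is a harmless bookkeeping point and does not change the argument.
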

\begin{remark}We do not know if the weighted bound in (1.10) is sharp.
\end{remark}
In what follows, $C$ always denotes a
positive constant that is independent of the main parameters
involved but whose value may differ from line to line. We use the
symbol $A\lesssim B$ to denote that there exists a positive constant
$C$ such that $A\le CB$.  Constant with subscript such as $C_1$,
does not change in different occurrences. For any set $E\subset\mathbb{R}^n$,
$\chi_E$ denotes its characteristic function.  For a cube
$Q\subset\mathbb{R}^n$ and $\lambda\in(0,\,\infty)$, we use $\ell(Q)$ (${\rm diam}Q$) to denote the side length (diamter) of $Q$, and
$\lambda Q$ to denote the cube with the same center as $Q$ and whose
side length is $\lambda$ times that of $Q$. For $x\in\mathbb{R}^n$ and $r>0$, $B(x,\,r)$ denotes the ball centered at $x$ and having radius $r$.
\section{Proof of Theorem \ref{t1.3}}
We begin with some preliminary lemmas.
\begin{lemma}\label{l2.1}
Let $q,\,p_0\in (1,\,\infty)$, $\varrho\in [0,\,\infty)$ and $S$ be a sublinear operator. Suppose that
$$\|\{Sf_k\}\|_{L^{p_0}(l^q;\,\mathbb{R}^n)}\lesssim \|\{f_k\}\|_{L^{p_0}(l^q;\,\mathbb{R}^n)},$$
and for all $\lambda>0$,
$$\big|\{x\in\mathbb{R}^n:\,\|\{Sf_k(x)\}\|_{l^q}>\lambda\}\big|\lesssim \int_{\mathbb{R}^n}\frac{\|\{f_k\}\|_{l^q}}{\lambda}
\log ^{\varrho}\Big(1+\frac{\|\{f_k\}\|_{l^q}}{\lambda}\Big)dx.$$
Then for cubes $Q_2\subset Q_1\subset \mathbb{R}^n$,
$$\frac{1}{|Q_1|}\int_{Q_1}\big\|\{S(f_k\chi_{Q_2})(x)\}\big\|_{l^q}dx\lesssim \big\|\|\{f_k\}\|_{l^q}\big\|_{L(\log L)^{\varrho+1},\,Q_2},$$here and in the following,
for $\beta\in [0,\,\infty)$,
$$\|f\|_{L(\log L)^{\beta},\,Q}=\inf\Big\{\lambda>0:\,\frac{1}{|Q|}\int_{Q}\frac{|f(y)|}{\lambda}\log^{\beta}\Big(1+\frac{|f(y)|}{\lambda}\Big)dy\leq 1\Big\}.$$
\end{lemma}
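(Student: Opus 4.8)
The plan is to run a Calder\'on--Zygmund truncation at the very height at which the distribution function is being measured, estimating the truncated part by the strong $L^{p_0}(l^q)$ bound and the tail by the weak $L(\log L)^{\varrho}$ endpoint. Throughout write $g=\|\{f_k\}\|_{l^q}$. By the homogeneity of the Luxemburg functional $\|\cdot\|_{L(\log L)^{\varrho+1},\,Q_2}$ we may normalize so that $\|g\|_{L(\log L)^{\varrho+1},\,Q_2}=1$, which yields $\frac1{|Q_2|}\int_{Q_2}g\log^{\varrho+1}(1+g)\,dx\le1$; from this one extracts the two elementary consequences $\int_{Q_2}g\,dx\lesssim_{\varrho}|Q_2|$ and $\int_{Q_2}g\log^{\varrho+1}(e+g)\,dx\lesssim_{\varrho}|Q_2|$ (compare the integrand with $\log^{\varrho+1}(1+g)$ on $\{g>1\}$ and with a constant on $\{g\le1\}$). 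It then suffices to prove $\int_{Q_1}\|\{S(f_k\chi_{Q_2})(x)\}\|_{l^q}\,dx\lesssim_{\varrho,\,p_0}|Q_1|$.

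By the layer-cake formula, split this integral as $\big(\int_0^1+\int_1^\infty\big)\mu(\lambda)\,d\lambda$ where $\mu(\lambda)=\big|\{x\in Q_1:\|\{S(f_k\chi_{Q_2})(x)\}\|_{l^q}>\lambda\}\big|$; the part over $(0,1]$ contributes at most $|Q_1|$. For $\lambda>1$ decompose $f_k\chi_{Q_2}=f_k\chi_{Q_2\cap\{g\le\lambda\}}+f_k\chi_{Q_2\cap\{g>\lambda\}}=:f_k^{\lambda,1}+f_k^{\lambda,2}$. Sublinearity of $S$ ($|S(u+v)|\le|Su|+|Sv|$, $|S(cu)|=|c|\,|Su|$ pointwise) together with Minkowski's inequality in $l^q$ gives $\|\{S(f_k\chi_{Q_2})\}\|_{l^q}\le\|\{Sf_k^{\lambda,1}\}\|_{l^q}+\|\{Sf_k^{\lambda,2}\}\|_{l^q}$, so $\mu(\lambda)\le\mu_1(\lambda)+\mu_2(\lambda)$, the level sets of the two pieces at height $\lambda/2$. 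For the truncated piece, whose $l^q$-norm is $g\,\chi_{\{g\le\lambda\}}\le\lambda$, Chebyshev and the assumed $L^{p_0}(l^q)$ bound give $\mu_1(\lambda)\lesssim\lambda^{-p_0}\int_{Q_2\cap\{g\le\lambda\}}g^{p_0}\,dx$; for the tail, whose $l^q$-norm is $g\,\chi_{\{g>\lambda\}}$, the weak endpoint hypothesis gives $\mu_2(\lambda)\lesssim\lambda^{-1}\int_{Q_2\cap\{g>\lambda\}}g\log^{\varrho}(1+2g/\lambda)\,dx$.

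It remains to integrate these bounds over $\lambda\in(1,\infty)$ and apply Tonelli. The truncated part produces $\int_{Q_2}g(y)^{p_0}\big(\int_{\max(1,g(y))}^{\infty}\lambda^{-p_0}\,d\lambda\big)dy\simeq_{p_0}\int_{Q_2}g^{p_0}\max(1,g)^{1-p_0}\,dy=\int_{Q_2\cap\{g\le1\}}g^{p_0}\,dy+\int_{Q_2\cap\{g>1\}}g\,dy\lesssim|Q_2|$ by the normalization. The tail produces $\int_{Q_2}g(y)\big(\int_1^{g(y)}\lambda^{-1}\log^{\varrho}(1+2g(y)/\lambda)\,d\lambda\big)dy$ (the inner integral being vacuous unless $g(y)>1$), and the inner integral is $\lesssim_{\varrho}\log^{\varrho+1}(e+g(y))$ --- this is exactly where the passage from $L(\log L)^{\varrho}$ to $L(\log L)^{\varrho+1}$ takes place --- so the tail contribution is $\lesssim_{\varrho}\int_{Q_2}g\log^{\varrho+1}(e+g)\,dy\lesssim|Q_2|$. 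Summing, $\int_{Q_1}\|\{S(f_k\chi_{Q_2})\}\|_{l^q}\,dx\lesssim|Q_1|+|Q_2|\le2|Q_1|$ because $Q_2\subset Q_1$, which is the claim after undoing the normalization.

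The one genuinely fiddly point is the bookkeeping in the last step: the calculus estimate $\int_1^{t}\lambda^{-1}\log^{\varrho}(1+2t/\lambda)\,d\lambda\lesssim_{\varrho}\log^{\varrho+1}(e+t)$ for $t>1$ (obtained via $u=t/\lambda$ and a crude bound on $\log^{\varrho}(1+2u)$ over the range of integration), plus the two comparisons reducing $\int_{Q_2}g$ and $\int_{Q_2}g\log^{\varrho+1}(e+g)$ to multiples of $\int_{Q_2}g\log^{\varrho+1}(1+g)\le|Q_2|$; none of these is deep, but the implied constants depend on $\varrho$ and must be tracked. A secondary point worth making explicit at the outset is the precise sense in which $S$ is sublinear on $l^q$-valued sequences, so that the splitting $\mu(\lambda)\le\mu_1(\lambda)+\mu_2(\lambda)$ is legitimate. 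Note also that the strong $L^{p_0}(l^q)$ hypothesis is genuinely needed: replacing it by the weak endpoint in the control of $\mu_1$ makes the $\lambda$-integral over $(1,\infty)$ diverge when $\varrho<1$.
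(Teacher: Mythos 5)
Your proof is correct and follows essentially the same route as the paper's: after the identical Luxemburg normalization, both arguments integrate the distribution function of $\big\|\{S(f_k\chi_{Q_2})\}\big\|_{l^q}$ over $\lambda\in(1,\infty)$, splitting $f_k$ at a $\lambda$-dependent height and treating the bounded piece with the strong $L^{p_0}(l^q)$ bound and the unbounded piece with the weak $L(\log L)^{\varrho}$ endpoint, the extra logarithm coming from the inner $\int \lambda^{-1}\,d\lambda$. The only cosmetic difference is the truncation height ($\lambda$ for you versus $\lambda^{\frac{p_0-1}{2p_0}}$ in the paper), which does not change the mechanism; your closing aside that dropping the strong bound makes the integral diverge for all $\varrho<1$ is slightly off (it diverges only at $\varrho=0$, with the constant degenerating as $\varrho\to0$), but this plays no role in the proof.
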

\begin{proof} Lemma \ref{l2.1} is a generalization of Lemma 3.1 in \cite{huyang}. Their proofs are very similar. By homogeneity, we may assume that $\big\|\|\{f_k\}\|_{l^q}\big\|_{L(\log L)^{\varrho+1},\,Q_2}=1$, which implies that
$$\int_{Q_2}\|\{f_k(x)\}\|_{l^q}
\log ^{\varrho+1}\big(1+\|\{f_k(x)\}\|_{l^q}\big)dx\leq |Q_2|.$$
For each fixed $\lambda > 0$, set $\Omega_{\lambda}=\big\{x\in\mathbb{R}^n:\, \|\{f_k(x)\}\|_{l^q} >\lambda^{\frac{p_0-1}{2p_0}}\big\}$. Decompose $f_k$ as
$$f_k(x)= f_k(x)\chi_{\Omega_{\lambda}}(x)+f_k(x)\chi_{\mathbb{R}^n\backslash\Omega_{\lambda}}(x)=f_k^1(x)+f_k^2(x).$$
It is obvious that $\|\{f_k^2\}\|_{L^{\infty}(l^q;\,\mathbb{R}^n)}\leq \lambda^{\frac{p_0-1}{2p_0}}.$ A trivial computation leads to that
\begin{eqnarray*}
&&\int_{1}^{\infty}\big|\{x\in \mathbb{R}^n:\,\|\{S(f_k^2\chi_{Q_2})(x)\}\|_{l^q}>\lambda/2\}\big|d\lambda\\
&&\quad\lesssim\int_1^{\infty}\int_{Q_2}\big\|\{f_k^2(x)\}\big\|_{l^q}^{p_0}dx\lambda^{-p_0}d\lambda\\
&&\quad\lesssim\int_{Q_2}\big\|\{f_k^2(x)\}\big\|_{l^q}dx\int_1^{\infty}\lambda^{-p_0+\frac{(p_0-1)^2}{2p_0}}d\lambda\lesssim|Q_2|.\\
\end{eqnarray*}
On the other hand,
\begin{eqnarray*}
&&\int_{1}^{\infty}\big|\{x\in \mathbb{R}^n:\,\|\{S(f_k^1\chi_{Q_2})(x)\}\|_{l^q}>\lambda/2\}\big|d\lambda\\
&&\quad\lesssim\int_1^{\infty}\int_{Q_2}\big\|\{f_k^1(x)\}\big\|_{l^q}\log^{\varrho}\big(1+\big\|\{f_k^1(x)\}\big\|_{l^q}\big)dx\lambda^{-1}d\lambda\\
&&\quad\lesssim\int_{Q_2}\big\|\{f_k^1(x)\}\big\|_{l^q}\log^{\varrho}\big(1+\big\|\{f_k^1(x)\}\big\|_{l^q}\big)
\int_1^{\|\{f_k(x)\}\|_{l^q}^{\frac{2p_0}{p_0-1}}}\frac{1}{\lambda}d\lambda dx\\
&&\quad\lesssim \int_{Q_2}\big\|\{f_k^1(x)\}\big\|_{l^q}\log^{\varrho+1}\big(1+\big\|\{f_k^1(x)\}\big\|_{l^q}\big)dx.
\end{eqnarray*}
Combining the estimates above then yields
\begin{eqnarray*}&&\int_{0}^{\infty}\big|\{x\in Q_1:\,\|\{S(f_k\chi_{Q_2})(x)\}\|_{l^q}>\lambda\}\big|d\lambda\\
&&\quad\lesssim\int_{0}^{1}\big|\{x\in Q_1:\,\|\{S(f_k\chi_{Q_2})(x)\}\|_{l^q}>\lambda\}\big|d\lambda\\
&&\qquad+\int_{1}^{\infty}\big|\{x\in \mathbb{R}^n:\,\|\{S(f_k^1\chi_{Q_2})(x)\}\|_{l^q}>\lambda/2\}\big|d\lambda\\
&&\qquad+\int_{1}^{\infty}\big|\{x\in \mathbb{R}^n:\,\|\{S(f_k^2\chi_{Q_2})(x)\}\|_{l^q}>\lambda/2\}\big|d\lambda\\
&&\quad\lesssim|Q_1|.
\end{eqnarray*}
This  completes the proof of Lemma \ref{l2.1}.
\end{proof}
Recall that  the standard dyadic grid in $\mathbb{R}^n$ consists of all cubes of the form $$2^{-k}([0,\,1)^n+j),\,k\in \mathbb{Z},\,\,j\in\mathbb{Z}^n.$$
Denote the standard grid by $\mathcal{D}$. For a fixed cube $Q$, denote by $\mathcal{D}(Q)$ the set of dyadic cubes with respect to $Q$, that is, the cubes from $\mathcal{D}(Q)$ are formed by repeating subdivision of $Q$ and each of descendants into $2^n$ congruent subcubes.

As usual, by a general dyadic grid $\mathscr{D}$,  we mean a collection of cube with the following properties: (i) for any cube $Q\in \mathscr{D}$, it side length $\ell(Q)$ is of the form $2^k$ for some $k\in \mathbb{Z}$; (ii) for any cubes $Q_1,\,Q_2\in \mathscr{D}$, $Q_1\cap Q_2\in\{Q_1,\,Q_2,\,\emptyset\}$; (iii) for each $k\in \mathbb{Z}$, the cubes of side length $2^k$ form a partition of $\mathbb{R}^n$. By the one-third trick, (see \cite[Lemma 2.5]{hlp}), there exist dyadic grids $\mathscr{D}_1,\,\dots,\,\mathscr{D}_{3^n}$, such that for each cube $Q\subset\mathbb{R}^n$, there exists a cube $I\in\mathscr{D}_j$ for some $j$, $Q\subset I$ and $\ell(Q)\approx \ell(I)$.

Let $\{D_t\}_{t>0}$ be an approximation to the identity. Associated with $\{D_t\}_{t>0}$, define the sharp maximal operator $M_{D}^{\sharp}$ by
$$M_{D}^{\sharp}f(x)=\sup_{B\ni x}\frac{1}{|B|}\int_{B}|f(y)-D_{t_B}f(y)|dy,\,\,f\in L^p(\mathbb{R}^n),\,\,p\in [1,\,\infty)
$$
with $t_B=r_B^{s}$ and $s$ the constant appeared in (1.6),  the supremum is taken over all balls in $\mathbb{R}^n$. This operator was introduced by Martell \cite{mar} and plays an important role in the weighted estimates for singular integral operators with non-smooth kernels. Let $q\in (1,\,\infty)$,
$\{f_k\}\subset L^{p_0}(\mathbb{R}^n)$ for some $p_0\in [1,\,\infty]$,
set
$$M_{D}^{\sharp}(\{f_k\})(x)=\sup_{B\ni x}\frac{1}{|B|}\int_{B}\big\|\{|f_k(y)-D_{t_B}f_k(y)|\}\big\|_{l^q}dy.
$$
\begin{lemma}\label{l2.2}
Let $\lambda>0$, $\{f_k\}\subset L^1(\mathbb{R}^n)$ with compact support, $B\subset \mathbb{R}^n$ be a cube such that there
exists $x_0\in B$ with $M(\|\{f_k\}\|_{l^q})(x_0)<\lambda$. Then, for every $\zeta\in (0,\,1)$, we can find
$\gamma> 0$ (independent of $\lambda$, $B$, $f$, $x_0$), such that
\begin{eqnarray}|\{x\in B:\,M(\|\{f_k\}\|_{l^q})(x)>A\lambda,\,M^{\sharp}_D(\{f_k\})(x)\leq \gamma \lambda\}|\leq \zeta|B|,
\end{eqnarray}where $A>1$ is a fixed constant which only depends on  the approximation
of the identity $\{D_t\}_{t>0}$.
\end{lemma}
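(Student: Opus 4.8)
The plan is to follow the classical good-$\lambda$ philosophy for the sharp maximal operator, as developed by Martell for the scalar case, but adapted to the $l^q$-vector-valued setting. First I would fix the cube $B$ and the point $x_0\in B$ with $M(\|\{f_k\}\|_{l^q})(x_0)<\lambda$, and decompose each $f_k$ relative to the enlarged cube: write $f_k=f_k\chi_{2B}+f_k\chi_{\mathbb{R}^n\setminus 2B}=:f_k^{(1)}+f_k^{(2)}$. The far part is easy: for $x\in B$ one controls $M(\|\{f_k^{(2)}\}\|_{l^q})(x)$ by a dimensional multiple of $M(\|\{f_k\}\|_{l^q})(x_0)<\lambda$ using the standard annular estimate, so that contribution simply disappears into the choice of $A$. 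For the near part one would like to compare $f_k^{(1)}$ with $D_{t_B}f_k^{(1)}$, which is exactly what $M_D^\sharp$ measures.

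The key steps, in order, would be: (i) reduce to controlling $|\{x\in B: M(\|\{f_k^{(1)}\}\|_{l^q})(x)>(A-c_n)\lambda,\ M_D^\sharp(\{f_k\})(x)\le\gamma\lambda\}|$; (ii) split $M(\|\{f_k^{(1)}\}\|_{l^q})$ into the local maximal operator restricted to small balls $M^{\mathrm{loc}}$ and the contribution of large balls, the latter again being $\lesssim\lambda$ for $x\in B$; (iii) bound $\|\{f_k^{(1)}\}\|_{l^q}\le\|\{f_k^{(1)}-D_{t_B}f_k^{(1)}\}\|_{l^q}+\|\{D_{t_B}f_k^{(1)}\}\|_{l^q}$ and use that $D_{t_B}$ has a kernel bounded by $h_{t_B}$, so Minkowski's inequality in $l^q$ gives a pointwise bound $\|\{D_{t_B}f_k^{(1)}(y)\}\|_{l^q}\lesssim M(\|\{f_k\}\|_{l^q})(y)$ with a decaying tail; (iv) for the first term, observe that the local part of $M$ applied to $\|\{f_k^{(1)}-D_{t_B}f_k^{(1)}\}\|_{l^q}$ is itself controlled on $B$ by $M^\sharp_D(\{f_k\})$ plus a harmless average over $2B$, which under the hypothesis $M(\|\{f_k\}\|_{l^q})(x_0)<\lambda$ is $\lesssim\lambda$; (v) finally apply the weak-$(1,1)$ boundedness of the Hardy–Littlewood maximal operator to the exceptional set: the measure of $\{x\in B: M^{\mathrm{loc}}(\|\{f_k^{(1)}-D_{t_B}f_k^{(1)}\}\|_{l^q})(x)>\tfrac{1}{2}\gamma\lambda\}$ is at most $\tfrac{C}{\gamma\lambda}\int_{2B}\|\{f_k-D_{t_B}f_k\}\|_{l^q}\le \tfrac{C}{\gamma\lambda}|2B|\,M_D^\sharp(\{f_k\})(x_0')$ for a suitable $x_0'$ — but since on the exceptional set we also assume $M_D^\sharp(\{f_k\})\le\gamma\lambda$, one instead bounds the integral over $2B$ directly, splitting $2B$ according to whether $M_D^\sharp(\{f_k\})\le\gamma\lambda$, giving a bound $\le C\gamma|B|$, and choosing $\gamma$ small relative to $\zeta$ finishes the estimate.

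The delicate point — and the main obstacle — is step (iv)/(v): one cannot simply assert that $M_D^\sharp(\{f_k\})\le\gamma\lambda$ \emph{throughout} $2B$, since the hypothesis only gives this on the exceptional set inside $B$. The standard device (due to Martell) is to work with a local maximal operator indexed by balls of radius comparable to $r_B$, and to note that for such balls the averaging process for $M_D^\sharp$ may be taken with the \emph{same} dilation parameter $t_B$; this lets one replace the $\lambda$-dependent smoothing $D_{t_B}$ by a fixed one and reduce everything to a single integral $\int_{2B}\|\{f_k - D_{t_B}f_k\}\|_{l^q}$. Then one truly does use that this integral is controlled by $|B|\inf_{x\in B}M_D^\sharp(\{f_k\})(x)$, and the infimum is taken over points that we may assume lie in the set where $M_D^\sharp(\{f_k\})\le\gamma\lambda$ — because if no such point exists, the exceptional set in (2.2) is empty and there is nothing to prove. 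Carrying out the vector-valued bookkeeping (Minkowski in $l^q$ for the kernel estimates, and checking that the constant $A$ depends only on $h$ and $n$, not on $q$) is routine once the scalar argument of \cite{mar} is in hand; the constant $A$ emerges from the fixed number of dilations needed to pass from $B$ to the balls over which $M$ and $M_D^\sharp$ are averaged.
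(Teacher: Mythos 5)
Your overall strategy is the same as the paper's (localize using the point $x_0$, absorb the smoothed part into the constant $A$, apply the weak $(1,1)$ inequality for $M$ to the oscillation $f_k-D_tf_k$, and exploit that the exceptional set, if nonempty, contains a point where $M_D^{\sharp}(\{f_k\})\le\gamma\lambda$), but step (v) as written does not produce the factor $\gamma$. You place the threshold for $M^{\mathrm{loc}}\big(\|\{f_k^{(1)}-D_{t_B}f_k^{(1)}\}\|_{l^q}\big)$ at $\tfrac{1}{2}\gamma\lambda$, so the weak $(1,1)$ estimate yields $\tfrac{C}{\gamma\lambda}\int_{2B}\|\{f_k-D_{t_B}f_k\}\|_{l^q}$; since the best available bound for that integral is $\lesssim |B|\,M_D^{\sharp}(\{f_k\})(x_E)\le\gamma\lambda|B|$ with $x_E$ a point of the exceptional set, you end up with $C|B|$, not $C\gamma|B|$. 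The threshold must be $c\lambda$ with $c$ independent of $\gamma$, and this is precisely where $A$ is chosen: in the paper one first proves $M\big(\|\{(D_tf_k)\chi_{4B}\}\|_{l^q}\big)(x)\le C_1\lambda$ uniformly --- note the bound must be in terms of $M(\|\{f_k\}\|_{l^q})(x_0)<\lambda$, not of $M(\|\{f_k\}\|_{l^q})(y)$ as in your (iii), since otherwise applying $M$ again gives $M\circ M$, which is large on the bad set; at scale $t\approx r_B^{s}$ with the function supported in a fixed dilate of $B$ the bound at $x_0$ is indeed what the kernel estimate gives --- then takes $A$ so that $2^{-2n}A=C_1+1$, whence on $E_\lambda$ one has $M\big(\|\{(f_k-D_tf_k)\chi_{4B}\}\|_{l^q}\big)>\lambda$, and only then weak $(1,1)$ gives $|E_\lambda|\le C\lambda^{-1}\int_{16B}\|\{f_k-D_tf_k\}\|_{l^q}\le C16^n\gamma|B|$.

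Two further points. The proposed ``splitting of $2B$ according to whether $M_D^{\sharp}(\{f_k\})\le\gamma\lambda$'' cannot work: on the portion of $2B$ where $M_D^{\sharp}(\{f_k\})>\gamma\lambda$ you have no control whatsoever on $\int\|\{f_k-D_tf_k\}\|_{l^q}$. The correct device --- which you do state in your last paragraph but never reconcile with (v) --- is to use a single point $x_E$ of the exceptional set (nonempty without loss of generality), which lies in $B$ and hence in every enlarged ball over which you average. Relatedly, the dilation parameter must match the ball appearing in the definition of $M_D^{\sharp}$: the paper takes $t=r_{16B}^{s}$ paired with the ball $16B\ni x_E$, so that $\int_{4B}\|\{f_k-D_tf_k\}\|_{l^q}\le |16B|\,M_D^{\sharp}(\{f_k\})(x_E)$; your assertion that ``the averaging for $M_D^{\sharp}$ may be taken with the same parameter $t_B$'' is not licensed by the definition, although the repair is only to run the whole comparison at the scale of one fixed enlarged ball. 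Finally, the claim in (iv) of a pointwise control of the local maximal function by $M_D^{\sharp}$ is false and is not needed. With these corrections your argument collapses onto the paper's proof.
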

\begin{proof} Let $A\in (1,\,\infty)$  be a constant which will be chosen later. For $\lambda>0$, set
$$E_{\lambda}=\{x\in B:\, M\big(\|\{f_k\}\|_{l^q}\big)(x)>A\lambda,\,M_{D}^{\sharp}(\{f_k\})(x)\le \gamma\lambda\}.
$$
We assume that there exists $x_E\in E_{\lambda}$, for otherwise there is nothing to prove.
As in the proof of Proposition 4.1 in \cite{mar} (see also the proof of Lemma 2.6 of \cite{huyang}), we can verify that for each $x\in E_{\lambda}$ and  $\widetilde{A}=2^{-2n}A$,
$$M(\|\{f_k\}\|_{l^q}\chi_{4B})(x)>\widetilde{A}\lambda.
$$
Now let $t=r_{16B}^{s}.$ For $y\in 4B$, write
$$|D_t(f_k\chi_{16B})(y)|\leq \int_{16B}|h_t(y,\,z)f_k(z)|dz.$$
By Minkowski's inequality, we deduce that
$$\big\|\{|D_{t}(f_k\chi_{16B})(y)|\}\big\|_{l^q}\le \int_{16B}|h_t(y,\,z)|\|\{f_k(z)\}\|_{l^q}dz\lesssim M(\|\{f_k\}\|_{l^q})(x_0),$$
since $h$ is bounded on $[0,\,\infty)$. Also, we have that for $y\in 4B$,
\begin{eqnarray*}\big\|\{|D_{t}(f_k\chi_{\mathbb{R}^n\backslash 16B})(y)|\}\big\|_{l^q}&\le &\sum_{l=4}^{\infty}\int_{2^{l+1}B\backslash 2^{l}B}|h_t(y,\,z)|\|\{f_k(z)\}\|_{l^q}dz\\
&\lesssim&\sum_{l=4}^{\infty}\frac{1}{|B|}\int_{2^{l+1}B\backslash 2^{l}B}h(2^{l+4})\|\{f_k(z)\}\|_{l^q}dz\\
&\lesssim &M(\|\{f_k\}\|_{l^q})(x_0).\end{eqnarray*}
This, in turn implies that for all $y\in\mathbb{R}^n$,
$$M\big(\|\{(D_{t}f_k)\chi_{4B}\}\|_{l^q}\big)(x)\lesssim M(\|\{f_k\}\|_{l^q})(x_0)\le C_1\lambda,$$
with $C_1>0$ a constant. Therefore, for each $x\in E_{\lambda}$,
\begin{eqnarray*}
M\big(\|\{f_k\chi_{4B}\}\|_{l^q}\big)(x)&\leq &M\big(\|\{(f_k-D_tf_k)\chi_{4B}\}\|_{l^q}\big)(x)+M\big(\|\{(D_tf_k)\chi_{4B}\}\|_{l^q}\big)(x)\\
&\le&M\big(\|\{(f_k-D_tf_k)\chi_{4B}\}\|_{l^q}\big)(x)+C_1\lambda.
\end{eqnarray*}
We choose $A>1$ such that $\widetilde{A}=C_1+1$. It then follows that
$$E_{\lambda}\subset\{x\in B:\, M\big(\|\{(f_k-D_tf_k)\chi_{4B}\}\|_{l^q}\big)(x)>\lambda\}.$$
This, via the weak type $(1,\,1)$ estimate of $M$, tells us that
$$
|E_{\lambda}|\le C_2\lambda^{-1}\int_{4B}\|f_k(y)-D_tf_k(y)\}\|_{l^q}dy\le  C_216^n\lambda^{-1}|B|M_{D}^{\sharp}(\{f_k\})(x_E)\leq C_216^n\gamma|B|.
$$
For each $\zeta\in (0,\,1)$, let $\gamma=\zeta(2C_216^n)^{-1}$. The inequality (2.1) holds for this $\gamma$.\end{proof}
As in the proof of the Fefferman-Stein inequality (see \cite[pp 150-151]{gra}, or the proof of Theorem 2.2 in \cite{huyang}), we can deduce from Lemma \ref{l2.2} that
\begin{corollary}\label{l2.3}
Let $\Phi$ be an increasing function on $[0,\,\infty)$ satisfying that
$$\Phi(2t)\leq C\Phi(t),\,\,\,t\in[0,\,\infty).$$ $\{D_t\}_{t>0}$ be an approximation to the identity
as in Definition 1.4. Let $\{f_k\}$ be a sequence of functions such that for any $R>0$,
$$\sup_{0<\lambda<R}\Phi(\lambda)|\{x\in\mathbb{R}^n:\, M(\|\{f_k\}\|_{l^q})(x)>\lambda\}|<\infty.$$
Then
$$\sup_{\lambda>0}\Phi(\lambda)|\{x\in\mathbb{R}^n:M(\|\{f_k\}\|_{l^q})(x)>\lambda\}|\lesssim
\sup_{\lambda>0}\Phi(\lambda)|\{x\in\mathbb{R}^n:M_{D}^{\sharp}(\{f_k\})(x)>\lambda\}|.$$

\end{corollary}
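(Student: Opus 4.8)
The plan is to run the classical Fefferman--Stein good-$\lambda$ argument, with Lemma~\ref{l2.2} supplying the local good-$\lambda$ estimate and with the doubling of $\Phi$ together with the a priori finiteness hypothesis used to absorb a term at the end. Throughout put $g=\|\{f_k\}\|_{l^q}$; as in the applications (cf. the hypotheses of Lemma~\ref{l2.2}) we may assume $\{f_k\}\subset L^1(\mathbb{R}^n)$ has compact support, so that for each $\lambda>0$ the set $\Omega_\lambda=\{x\in\mathbb{R}^n:\,Mg(x)>\lambda\}$ is open and of finite measure, in particular $\Omega_\lambda\neq\mathbb{R}^n$.

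\emph{Step 1 (a global good-$\lambda$ inequality).} Fix $\lambda>0$ and take a Whitney decomposition $\Omega_\lambda=\bigcup_j Q_j$ into pairwise disjoint dyadic cubes with $\operatorname{diam}Q_j\approx\operatorname{dist}(Q_j,\,\mathbb{R}^n\setminus\Omega_\lambda)$. For a suitable dimensional dilation factor $c_n$, the cube $\widehat{Q}_j=c_nQ_j$ meets $\mathbb{R}^n\setminus\Omega_\lambda$, so there is $x_0\in\widehat{Q}_j$ with $Mg(x_0)\le\lambda$; the proof of Lemma~\ref{l2.2} applies verbatim with the non-strict inequality, so for the constant $A>1$ there and any prescribed $\zeta\in(0,1)$ with the corresponding $\gamma=\gamma(\zeta)>0$,
$$\big|\{x\in\widehat{Q}_j:\,Mg(x)>A\lambda,\ M_D^\sharp(\{f_k\})(x)\le\gamma\lambda\}\big|\le\zeta\,|\widehat{Q}_j|.$$
Since $A>1$ gives $\{Mg>A\lambda\}\subset\Omega_\lambda\subset\bigcup_j\widehat{Q}_j$, and since $\sum_j|\widehat{Q}_j|=c_n^n\sum_j|Q_j|=c_n^n|\Omega_\lambda|$ by disjointness of the $Q_j$, summation in $j$ and subadditivity yield
$$\big|\{x:\,Mg(x)>A\lambda,\ M_D^\sharp(\{f_k\})(x)\le\gamma\lambda\}\big|\le C_0\,\zeta\,\big|\{x:\,Mg(x)>\lambda\}\big|,\qquad C_0=c_n^n.$$

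\emph{Step 2 (insertion of $\Phi$ and absorption).} Because $A$ and $A/\gamma$ are fixed constants, iterating $\Phi(2t)\le C\Phi(t)$ finitely often gives $\Phi(A\lambda)\le\widetilde{C}_A\Phi(\lambda)$ and $\Phi(A\lambda)\le\widetilde{C}_{A,\gamma}\Phi(\gamma\lambda)$. From $\{Mg>A\lambda\}\subset\{Mg>A\lambda,\ M_D^\sharp(\{f_k\})\le\gamma\lambda\}\cup\{M_D^\sharp(\{f_k\})>\gamma\lambda\}$, multiplying by $\Phi(A\lambda)$ and using Step 1 and these two doubling bounds, for every $\lambda>0$
$$\Phi(A\lambda)\,\big|\{Mg>A\lambda\}\big|\le C_0\widetilde{C}_A\,\zeta\,\Phi(\lambda)\,\big|\{Mg>\lambda\}\big|+\widetilde{C}_{A,\gamma}\,\Phi(\gamma\lambda)\,\big|\{M_D^\sharp(\{f_k\})>\gamma\lambda\}\big|.$$
Set $N(R)=\sup_{0<\lambda<R}\Phi(\lambda)|\{Mg>\lambda\}|$ (finite for every $R>0$ by hypothesis) and $D=\sup_{\lambda>0}\Phi(\lambda)|\{M_D^\sharp(\{f_k\})>\lambda\}|$. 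Substituting $\mu=A\lambda$ and taking the supremum over $0<\mu<R$ (hence $0<\lambda<R/A<R$) turns the last display into $N(R)\le C_0\widetilde{C}_A\,\zeta\,N(R)+\widetilde{C}_{A,\gamma}\,D$. Choosing $\zeta$ small enough that $C_0\widetilde{C}_A\,\zeta\le\tfrac12$ — which then fixes $\gamma=\gamma(\zeta)$ and $\widetilde{C}_{A,\gamma}$ as absolute constants — and using $N(R)<\infty$ to absorb, we get $N(R)\le 2\widetilde{C}_{A,\gamma}\,D$ for all $R>0$; letting $R\to\infty$ gives $\sup_{\lambda>0}\Phi(\lambda)|\{Mg>\lambda\}|\le 2\widetilde{C}_{A,\gamma}\,D$, which is the claim.

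\emph{Main obstacle.} The covering argument is routine; the delicate point is the bookkeeping in the absorption. One must multiply through by $\Phi(A\lambda)$ (not $\Phi(\lambda)$) before taking suprema, so that the left-hand side reproduces exactly $N(R)$, and one must invoke the a priori finiteness $N(R)<\infty$ for all finite $R$ — precisely the standing hypothesis on $\{f_k\}$ — in order to legitimately cancel the common term $N(R)$. The order of the choices is also essential: $C_0$ and $\widetilde{C}_A$ are determined by $n$ and the doubling constant of $\Phi$ first, then $\zeta$ is taken small, and only afterwards are $\gamma$ and $\widetilde{C}_{A,\gamma}$ produced by Lemma~\ref{l2.2}; otherwise the constants would be circular.
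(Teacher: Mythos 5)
Your proposal is correct and follows essentially the same route as the paper, which gives no details beyond citing the classical Fefferman--Stein good-$\lambda$ scheme (Grafakos, pp.~150--151; Hu--Yang) built on Lemma~\ref{l2.2}: your Whitney-decomposition good-$\lambda$ inequality followed by $\Phi$-doubling and absorption via the a priori finiteness is exactly that argument, with the constants chosen in the correct (non-circular) order. One cosmetic remark: the compact-support reduction is unnecessary, since the standing finiteness hypothesis already forces $|\{x:\,M(\|\{f_k\}\|_{l^q})(x)>\lambda\}|<\infty$ whenever $\Phi(\lambda)>0$, which is all the Whitney decomposition needs.
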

\begin{lemma}\label{l2.6}Let T be an $L^2(\mathbb{R}^n)$ bounded linear operator with kernel $K$ as in
(1.5). Suppose that T satisfies Assumption \ref{a1.0}. Then for any $q\in (1,\,\infty)$, $T$ is bounded from $L^1(l^q;\,\mathbb{R}^n)$ to $L^{1,\,\infty}(l^q;\,\mathbb{R}^n)$
\end{lemma}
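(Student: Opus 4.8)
The plan is to deduce the vector-valued weak $(1,1)$ bound from the scalar weak $(1,1)$ bound of $T$ (which holds by the Duong--McIntosh theorem quoted in the introduction, since $T$ is $L^2$-bounded and satisfies Assumption \ref{a1.0}) together with the $L^2$-boundedness of $T$, by adapting the classical argument that upgrades scalar Calder\'on--Zygmund estimates to their $l^q$-valued analogues. Concretely, fix $q\in(1,\infty)$ and a sequence $\{f_k\}\subset L^1(l^q;\mathbb{R}^n)$ with, say, each $f_k$ having compact support, and write $F=\|\{f_k\}\|_{l^q}\in L^1(\mathbb{R}^n)$. The first step is to perform a Calder\'on--Zygmund decomposition of $F$ at height $\lambda$: this produces a family of pairwise disjoint cubes $\{Q_j\}$ with $\lambda<\frac{1}{|Q_j|}\int_{Q_j}F\le 2^n\lambda$, $\sum_j|Q_j|\lesssim\lambda^{-1}\|F\|_{L^1}$, and $F\le\lambda$ a.e.\ off $\Omega=\bigcup_jQ_j$. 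Using this single scalar decomposition simultaneously for every $k$, split $f_k=g_k+b_k$ in the usual way, where $g_k=f_k$ on $\mathbb{R}^n\setminus\Omega$ and $g_k=\frac{1}{|Q_j|}\int_{Q_j}f_k$ on each $Q_j$, and $b_k=\sum_j b_{k,j}$ with $b_{k,j}=(f_k-\langle f_k\rangle_{Q_j})\chi_{Q_j}$.

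Next I would estimate the good part and the bad part separately. For the good part, one checks $\|\{g_k\}\|_{L^1(l^q)}\lesssim\|F\|_{L^1}$ and, by Minkowski's inequality for the $l^q$ average, $\|\{g_k\}\|_{l^q}\lesssim\lambda$ a.e.; hence $\|\{g_k\}\|_{L^2(l^q)}^2\lesssim\lambda\|F\|_{L^1}$. Since $T$ is a \emph{linear} operator bounded on $L^2(\mathbb{R}^n)$, it is automatically bounded on $L^2(l^q;\mathbb{R}^n)$ with the same norm (apply the scalar $L^2$ bound to $\sum_k c_k T f_k=T(\sum_k c_k f_k)$ and dualize, or use the standard vector-valued extension of bounded linear operators on $L^2$). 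Therefore
\begin{eqnarray*}
\big|\{x:\|\{Tg_k(x)\}\|_{l^q}>\lambda/2\}\big|\lesssim\lambda^{-2}\|\{Tg_k\}\|_{L^2(l^q)}^2\lesssim\lambda^{-2}\|\{g_k\}\|_{L^2(l^q)}^2\lesssim\lambda^{-1}\|F\|_{L^1}.
\end{eqnarray*}
For the bad part, I would first discard the set $2\Omega=\bigcup_j 2Q_j$, whose measure is $\lesssim\sum_j|Q_j|\lesssim\lambda^{-1}\|F\|_{L^1}$, and then estimate $\|\{Tb_k\}\|_{l^q}$ on $\mathbb{R}^n\setminus 2\Omega$. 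The point is to bound $\int_{\mathbb{R}^n\setminus 2\Omega}\|\{Tb_k(x)\}\|_{l^q}\,dx$; by Minkowski this is at most $\sum_j\int_{\mathbb{R}^n\setminus 2Q_j}\|\{Tb_{k,j}(x)\}\|_{l^q}\,dx$ and, interchanging the $l^q$ norm with the integral once more, it suffices to show $\int_{\mathbb{R}^n\setminus 2Q_j}|Tb_{k,j}(x)|\,dx\lesssim\|b_{k,j}\|_{L^1(Q_j)}$ with a constant uniform in $k$ and $j$ — after which summing in $k$ inside an $l^q$ (actually $l^1\hookrightarrow l^q$) norm and then in $j$ gives $\lesssim\|F\|_{L^1}$, and Chebyshev finishes the bad part.

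The main obstacle is precisely this last per-cube estimate $\int_{\mathbb{R}^n\setminus 2Q_j}|Tb_{k,j}(x)|\,dx\lesssim\|b_{k,j}\|_{L^1}$, since the kernel $K$ is nonsmooth and the usual H\"ormander-type cancellation argument is unavailable. Here one exploits Assumption \ref{a1.0} in the standard Duong--McIntosh fashion: write $b_{k,j}=(I-A_{t_j})b_{k,j}+A_{t_j}b_{k,j}$ with $t_j=(c_1^{-1}\ell(Q_j))^{s}$ (the scale adapted to $Q_j$), so that $Tb_{k,j}=T(I-A_{t_j})b_{k,j}+TA_{t_j}b_{k,j}$. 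For the first term, on $\mathbb{R}^n\setminus 2Q_j$ one uses $T(I-A_{t_j})b_{k,j}(x)=\int(K(x,y)-K_{t_j}(x,y))b_{k,j}(y)\,dy$ together with the integral bound $\int_{|x-y|\ge c_1t_j^{1/s}}|K(x,y)-K_{t_j}(x,y)|\,dx\lesssim 1$ from Assumption \ref{a1.0} and Fubini; the geometry $|x-y|\gtrsim\ell(Q_j)\approx c_1t_j^{1/s}$ for $x\notin 2Q_j$, $y\in Q_j$ makes this term $\lesssim\|b_{k,j}\|_{L^1}$. For the second term one uses that $A_{t_j}$ has a kernel dominated by $h_{t_j}$ with the decay \eqref{1.7} — precisely the argument already carried out in the proof of Lemma \ref{l2.2} — to get $\|A_{t_j}b_{k,j}\|_{L^1(\mathbb{R}^n)}\lesssim\|b_{k,j}\|_{L^1}$ and, combined with the $L^2$ boundedness of $T$ and Kolmogorov's inequality (or simply the scalar weak $(1,1)$ bound for $T$ applied to $A_{t_j}b_{k,j}$), control $\int_{\mathbb{R}^n\setminus 2Q_j}|TA_{t_j}b_{k,j}|$ by $\|b_{k,j}\|_{L^1}$ as well — though a small amount of care is needed because $\|b_{k,j}\|_{L^2}$ is not directly controlled, so one should instead invoke the weak-type bound for $T$ on the $L^1$ function $A_{t_j}b_{k,j}$ and integrate its distribution function over the finite-measure set $2\Omega^c\cap(\text{large ball})$, or use that $A_{t_j}b_{k,j}$ is supported essentially near $Q_j$ with $L^1$ norm $\lesssim\|b_{k,j}\|_{L^1}$ and amenable to the weak-$(1,1)$/Kolmogorov route. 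Assembling the good-part and bad-part bounds yields $|\{\|\{Tf_k\}\|_{l^q}>\lambda\}|\lesssim\lambda^{-1}\|\{f_k\}\|_{L^1(l^q)}$, which is the claim.
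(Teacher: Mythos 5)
Your overall architecture (a Calder\'on--Zygmund decomposition of $\|\{f_k\}\|_{l^q}$ followed by the Duong--McIntosh splitting $b_{k,j}=(I-A_{t_j})b_{k,j}+A_{t_j}b_{k,j}$, with Assumption \ref{a1.0} handling the $(I-A_{t_j})$ piece) is the right one and is essentially the paper's, but two of your steps do not go through. First, a scalar $L^2(\mathbb{R}^n)$-bounded linear operator is \emph{not} automatically bounded on $L^2(l^q;\mathbb{R}^n)$ for $q\neq 2$: the identity $T(\sum_k c_kf_k)=\sum_k c_kTf_k$ plus duality gives nothing of the kind, and the ``standard vector-valued extension'' (Marcinkiewicz--Zygmund) only produces the $l^2$-valued extension. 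The paper sidesteps this by estimating the good and smoothed parts in $L^q(l^q;\mathbb{R}^n)$, where vector-valued boundedness is literally equivalent to the scalar $L^q$ bound, and by using $\|\{g_k\}\|_{l^q}\lesssim\lambda$ to get $\|\{g_k\}\|^q_{L^q(l^q;\,\mathbb{R}^n)}\lesssim\lambda^{q-1}\|\{f_k\}\|_{L^1(l^q;\,\mathbb{R}^n)}$.

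Second, and more seriously, your bad-part strategy hinges on the per-cube estimate $\int_{\mathbb{R}^n\setminus 2Q_j}|TA_{t_j}b_{k,j}(x)|\,dx\lesssim\|b_{k,j}\|_{L^1}$, which cannot be obtained from the hypotheses: nothing is assumed about the kernel $K_{t_j}$ of $TA_{t_j}$ itself (Assumption \ref{a1.0} only controls $K-K_{t_j}$), and the repair you suggest --- Kolmogorov or the scalar weak $(1,1)$ bound ``integrated over a finite-measure set'' --- fails because $\mathbb{R}^n\setminus 2Q_j$ (and $\mathbb{R}^n\setminus 2\Omega$) has infinite measure and there is no natural large ball; a weak $(1,1)$ bound never yields an $L^1$ bound over a set of infinite measure. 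The point of the Duong--McIntosh scheme, and of the paper's proof, is that the smoothed pieces must be treated \emph{globally}: one sets $f_k^2=\sum_jA_{t_j}b_{k,j}$, proves $\|\{f_k^2\}\|^q_{L^q(l^q;\,\mathbb{R}^n)}\lesssim\lambda^{q-1}\|\{f_k\}\|_{L^1(l^q;\,\mathbb{R}^n)}$ by dualizing against $\{v_k\}$ and using $\int|v_k(y)A_{t_j}b_{k,j}(y)|\,dy\lesssim\|b_{k,j}\|_{L^1}\inf_{Q_j}Mv_k$ together with the Fefferman--Stein vector-valued maximal inequality, and then applies the $L^q(l^q)$ boundedness of $T$ and Chebyshev at level $\lambda$. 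Only the pieces $b_{k,j}-A_{t_j}b_{k,j}$ admit your per-cube $L^1$ estimate off the dilated cubes, exactly via Assumption \ref{a1.0} and Fubini, as you correctly describe (your subtraction of the averages $\langle f_k\rangle_{Q_j}$ is harmless but unnecessary, since the cancellation is supplied by $I-A_{t_j}$, not by mean zero). As written, the proposal therefore has a genuine gap at these two places; rerouting the smoothed part through the global $L^q(l^q)$ estimate is what makes the argument close.
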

\begin{proof}
We only consider the case $c_1=2$. The other cases can be treated in the same way. For  $\lambda>0$, by applying the Calder\'on-Zygmund decomposition to $\|\{f_k\}\|_{l^q}$ at level $\lambda$,
 we obtain a sequence of cubes $\{Q_l\}$ with disjoint interiors, such that
$$\lambda<\frac{1}{|Q_l|}\int_{Q_l}\big\|\{f_k(x)\}\big\|_{l^q}{\rm d}x\lesssim \lambda,$$
and  $\|\{f_k(x)\}\|_{l^q}\lesssim \lambda$ for a. e. $x\in\mathbb{R}^n\backslash \cup_{l}Q_l$. For each  fixed $k$, set
$$f_{k}^1(x)=f_k(x)\chi_{\mathbb{R}^n\backslash(\cup_lQ_l)}(x),$$
$$f_k^{2}(x)=\sum_{l}A_{t_{Q_l}}b_{k,\,l}(x),\,\,
f_{k}^{3}(x)=\sum_{l}\big(b_{k,\,l}(x)-A_{t_{Q_l}}b_{k,\,l}(x)\big)\chi_{Q_l}(x),$$
with $b_{k,\,l}(y)=f_k(y)\chi_{Q_l}(y)$, $t_{Q_l}=\{\ell(Q_l)\}^{s}$.
By the fact that
$ \big\|\big\{f_{k}^1\big\}\big\|_{L^{\infty}(l^{q};\,\mathbb{R}^n)}\lesssim \lambda,$ we deduce that
$$\big\|\{f_{k}^{1}\}\big\|_{L^{q}(l^{q};\,\mathbb{R}^n)}^q\lesssim \lambda^{q-1} \big\|\{f_{k}\}\big\|_{L^1(l^{q};\,\mathbb{R}^n)}.$$
Recalling that $T$ is bounded on $L^q(\mathbb{R}^n)$, we have  that
\begin{eqnarray}\Big|\Big\{x\in\mathbb{R}^n:\big\|\big\{Tf_k^1(x)\big\}\big\|_{l^q}>\lambda/3\Big\}\Big|\lesssim \lambda^{-q}\big\|\{f_{k}^{1}\}\big\|_{L^{q}(l^{q};\,\mathbb{R}^n)}^q\lesssim \lambda^{-1}\big\|\{f_{k}\}\big\|_{L^1(l^{q};\,\mathbb{R}^n)}.
\end{eqnarray}
On the other hand, we  get from (1.5) and (1.6) that
\begin{eqnarray*}
\int_{\mathbb{R}^n}\big|v_k(y)A_{t_{Q_l}}b_{k,\,l}(y)\big|{\rm d}y&\le &\int_{Q_l}|b_{k,\,l}(z)|\int_{\mathbb{R}^n}h_{t_{Q_l}}(z,\,y)|v_k(z)|{\rm d}z dy\\
&\lesssim &\int_{Q_{l}}|b_{k,\,l}(z)|{\rm d}z\inf_{y\in Q_l}Mv_k(y).
\end{eqnarray*}
A straightforward computation involving  Minkowski's inequality  gives us that
$$\Big(\sum_{k}\|b_ {k,\,l}\|^{q}_{L^1(\mathbb{R}^n)}\Big)^{1/q}\le \int_{Q_l}\Big(\sum_k|f_ k(y)|^{q}\Big)^{1/q}\,dy\lesssim \lambda |Q_l|.
$$
Therefore, by  Minkowski's inequality and the vector-valued inequality of the Hardy-Littlewood maximal operator $M$ (see \cite{fes}),
\begin{eqnarray*}
&&\Big\|\Big(\sum_k\Big|\sum_lA_{t_{Q_ l}} b_ {k,\,l}\Big|^{q}\Big)^{1/q}\Big\|_{L^{q}(\mathbb{R}^n)}\\
&&\quad\le\sup_{\|\{v_k\}\|_{L^{q'}(l^{q'};\,\mathbb{R}^n)}\leq 1}
\sum_k\sum_l\int_{\mathbb{R}^n}\big|v_k(y)A_{t_{Q_l}}b_{k,\,l}(y)\big|dy\\
&&\quad\lesssim\sup_{\|\{v_k\}\|_{L^{p_m'}(l^{r_m'};\,\mathbb{R}^n)}\leq 1}
\sum_k\sum_l\int_{Q_l}|b_ {k,\,l}(z)|{\rm d}z\inf_{y\in Q_l}Mv_k(y)\\
&&\quad\lesssim\sup_{\|\{v_k\}\|_{L^{q'}(l^{q'};\,\mathbb{R}^n)}\leq 1}
\sum_l\Big\{\sum_k\Big(\int_{Q_l}|b_{k,\,l}(z)|dz\Big)^{q}\Big\}^{\frac{1}{q}}\inf_{y\in Q_m^l}\big\|\big\{Mv_k(y)\big\}\big\|_{l^{q'}}\\
&&\quad\lesssim\sup_{\|\{v_k\}\|_{L^{q'}(l^{q'};\,\mathbb{R}^n)}\leq 1}
\sum_l\int_{Q_l}\big\|\big\{b_{k,\,l}(z)\big\}\big\|_{l^{q}}dz\inf_{y\in Q_l}\big\|\big\{Mv_k(y)\big\}\big\|_{l^{q'}}\\
&&\quad\lesssim\lambda\sup_{\|\{v_k\}\|_{L^{q'}(l^{q'};\,\mathbb{R}^n)}\leq 1}\int_{\cup_{j}Q_j}\big\|\big\{Mv_k(y)\big\}\big\|_{l^{q'}}dy\\
&&\quad\lesssim\lambda^{\frac{q-1}{q}}\big\|\{f_{k}\}\big\|_{L^1(l^{q};\,\mathbb{R}^n)}^{\frac{1}{q}}.
\end{eqnarray*}
This, along with the fact that $T$ is bounded from $L^{q}(\mathbb{R}^n)$, leads to that
\begin{eqnarray}\big|\big\{x\in\mathbb{R}^n:\big\|\big\{Tf_k^2(x)\big\}\big\|_{l^q}>
\lambda/3\big\}\big|\lesssim\lambda^{-1}\big\|\{f_{k}\}\big\|_{L^1(l^{q};\,\mathbb{R}^n)}.
\end{eqnarray}

We turn our attention to $Tf_k^2$. Let $\Omega=\cup_{l}4nQ_l$. It is obvious that $|\Omega|\lesssim \lambda^{-1}\|\{f_k\}\|_{L^1(l^q;\,\mathbb{R}^n)}.$
For each $x\in\mathbb{R}^n\backslash\Omega$, write
$$
\big|Tf_k^3(x)\big|\leq  \sum_{l}\int_{\mathbb{R}^{n}}\big|K(x;y)-K_{A_{t_{Q_l}}}(x;y)\big||b_{k,l}(y)|dy
$$
Applying Minkowski's inequality twice, we obtain
\begin{eqnarray*}
\big\|\{Tf_k^3(x)\}\big\|_{l^q}\leq  \sum_{l}\int_{\mathbb{R}^{n}}\big|K(x;y)-K_{A_{t_{Q_l}}}(x;y)\big|\|\{b_{k,l}(y)\}\|_{l^q}dy
\end{eqnarray*}
Therefore,
\begin{eqnarray}
&&\big|\big\{x\in\mathbb{R}^n\backslash {\Omega}:\, \big\|\big\{Tf_k^3(x)\big\}\big\|_{l^q}>\lambda/3\big\}\big|\\
&&\quad\lesssim\lambda^{-1}\sum_l\int_{\mathbb{R}^n}\int_{\mathbb{R}^n\backslash 4nQ_l}\big|K(x;y)-K_{A_{t_{Q_l}}}(x;y)\big|dx\|\{b_{k,l}(y)\}\|_{l^q}dy\nonumber\\
&&\quad\lesssim\lambda^{-1}\|\{f_k\}\|_{L^1(l^q;\,\mathbb{R}^n)}.\nonumber
\end{eqnarray}
Combining the inequalities  (2.2)-(2.4) leads to our conclusion.\end{proof}
\begin{lemma}\label{l2.4}Let $T$ be the singular integral operator in Theorem \ref{t1.2}, then for each $N\in\mathbb{R}^n$ and functions $\{f_k\}_{k=1}^N\subset L^{p_0}(\mathbb{R}^n)$  for some $p_0\in [1,\,\infty)$,
$$M^{\sharp}_{D}(\{Tf_k\})(x)\lesssim M_{L\log L}(\|\{f_k\}\|_{l^q})(x)+\|\{Mf_k(x)\}\|_{l^q}.$$
\end{lemma}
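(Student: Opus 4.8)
The plan is to fix a ball $B\ni x$, set $t:=t_B=r_B^s$, and estimate the average
$$\frac{1}{|B|}\int_B\big\|\{Tf_k(y)-D_t(Tf_k)(y)\}\big\|_{l^q}\,dy,$$
the supremum over $B\ni x$ then giving the assertion. We may assume the $f_k$ are bounded with compact support, the general case following by a standard approximation. Fix a cube $Q_0$ concentric with $B$ with $\ell(Q_0)\approx r_B$, the implicit constant chosen large enough (depending only on $n$, $s$, and the constants $c_1,c_2$ of Assumptions \ref{a1.0} and \ref{a1.1}) that every off-diagonal kernel estimate used below is available, and split $f_k=g_k+h_k$ with $g_k=f_k\chi_{Q_0}$, $h_k=f_k\chi_{\mathbb{R}^n\backslash Q_0}$. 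Since $Tf_k-D_t(Tf_k)=(Tg_k-D_t(Tg_k))+(Th_k-D_t(Th_k))$, it suffices to bound the ``near'' ($g_k$) and ``far'' ($h_k$) parts separately.

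For the far part, for $y\in B$ we have $y\notin{\rm supp}\,h_k$, so by (1.5) and the kernel representation of $D_tT$ in Assumption \ref{a1.1},
$$Th_k(y)-D_t(Th_k)(y)=\int_{\mathbb{R}^n\backslash Q_0}\big(K(y,z)-K^t(y,z)\big)f_k(z)\,dz.$$
For such $y,z$ one has $|y-z|\gtrsim\ell(Q_0)\gtrsim t^{1/s}$, whence $|K(y,z)-K^t(y,z)|\lesssim r_B^{\alpha}|y-z|^{-n-\alpha}$ by Assumption \ref{a1.1}; taking $l^q$-norms via Minkowski's inequality and summing over the dyadic annuli $2^{j+1}Q_0\backslash 2^jQ_0$ yields $\|\{Th_k(y)-D_t(Th_k)(y)\}\|_{l^q}\lesssim M(\|\{f_k\}\|_{l^q})(x)$, uniformly for $y\in B$. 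Since $Mg\lesssim M_{L\log L}g$ pointwise, averaging over $B$ gives an acceptable bound.

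For the near part I estimate $\frac{1}{|B|}\int_B\|\{Tg_k(y)\}\|_{l^q}\,dy$ and $\frac{1}{|B|}\int_B\|\{D_t(Tg_k)(y)\}\|_{l^q}\,dy$ separately. The first is bounded by $M_{L\log L}(\|\{f_k\}\|_{l^q})(x)$ upon applying Lemma \ref{l2.1} (with $\varrho=0$, on a cube containing $B$ of side $\approx\ell(Q_0)$) to the $l^q$-valued operator $\{f_k\}\mapsto\{Tf_k\}$, which is bounded on $L^q(l^q;\mathbb{R}^n)$ (as $T$ is bounded on $L^q(\mathbb{R}^n)$, see \cite{mar}) and maps $L^1(l^q;\mathbb{R}^n)$ to $L^{1,\infty}(l^q;\mathbb{R}^n)$ by Lemma \ref{l2.6}. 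For the second, writing $D_t(Tg_k)$ through the kernel $d_t$ of $D_t$, using $|d_t|\le h_t$ and Minkowski's inequality reduces matters to $\frac{1}{|B|}\int_{\mathbb{R}^n}\big(\int_Bh_t(y,w)\,dy\big)\|\{Tg_k(w)\}\|_{l^q}\,dw$. The contribution of $w$ near $Q_0$ is again dominated by $M_{L\log L}(\|\{f_k\}\|_{l^q})(x)$ via Lemma \ref{l2.1}; on the annuli $2^{j+1}Q_0\backslash 2^jQ_0$ away from $Q_0$ one has $\int_Bh_t(y,w)\,dy\lesssim 2^{-j(n+\eta)}$ by (1.6)--(1.7), and one splits $Tg_k=T(g_k-A_{t_0}g_k)+T(A_{t_0}g_k)$ with $t_0=\ell(Q_0)^s$ and $\{A_t\}_{t>0}$ the approximation of the identity of Assumption \ref{a1.0}. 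For $w\notin Q_0$, $T(g_k-A_{t_0}g_k)(w)=\int_{Q_0}\big(K(w,z)-K_{t_0}(w,z)\big)f_k(z)\,dz$, so Assumption \ref{a1.0} together with Fubini gives $\int_{2^{j+1}Q_0\backslash 2^jQ_0}\|\{T(g_k-A_{t_0}g_k)(w)\}\|_{l^q}\,dw\lesssim\int_{Q_0}\|\{f_k\}\|_{l^q}\lesssim|Q_0|\,M(\|\{f_k\}\|_{l^q})(x)$, which after multiplication by $2^{-j(n+\eta)}/|B|$ and summation in $j$ is acceptable. The smoothed piece is handled componentwise: $L^q$-boundedness of $T$ and Young's inequality yield $\|T(A_{t_0}g_k)\|_{L^q(\mathbb{R}^n)}\lesssim\|A_{t_0}g_k\|_{L^q(\mathbb{R}^n)}\lesssim\ell(Q_0)^{-n/q'}\|f_k\chi_{Q_0}\|_{L^1}\lesssim|Q_0|^{1/q}Mf_k(x)$, whence by H\"older $\int_{2^{j+1}Q_0\backslash 2^jQ_0}|T(A_{t_0}g_k)(w)|\,dw\lesssim 2^{jn/q'}|Q_0|\,Mf_k(x)$; multiplying by $2^{-j(n+\eta)}/|B|$, summing in $j$ (the exponent $n/q+\eta$ being positive), and only then taking the $l^q$-norm in $k$ produces exactly the term $\|\{Mf_k(x)\}\|_{l^q}$.

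I expect the crux to be this last ``near, off-diagonal'' piece $D_t\big(T(A_{t_0}g_k)\big)$ on the annuli $2^{j+1}Q_0\backslash 2^jQ_0$: neither the weak $(1,1)$ bound of Lemma \ref{l2.6} nor a vector-valued estimate routed through $\|\{f_k\}\|_{l^q}$ supplies summable decay in $j$ here, so one is forced to pass to the smoothed function $A_{t_0}g_k$, estimate $T$ of it componentwise in $L^q$, and pay with $Mf_k(x)$ — which is precisely why the second term appears on the right-hand side. All the other pieces reduce, in a by now standard way, to the off-diagonal kernel decay of Assumptions \ref{a1.0}--\ref{a1.1}, the endpoint bound of Lemma \ref{l2.6}, and the $L(\log L)$ averaging of Lemma \ref{l2.1}.
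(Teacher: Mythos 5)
Your overall skeleton is the same as the paper's: split $f_k$ into a near part supported on a fixed dilate of $B$ and a far part; bound the far part of $Tf_k-D_{t_B}Tf_k$ pointwise through the kernel estimate of Assumption \ref{a1.1}; bound the average of $\|\{T(f_k\chi_{Q_0})\}\|_{l^q}$ over $B$ by $M_{L\log L}(\|\{f_k\}\|_{l^q})(x)$ via Lemma \ref{l2.1} and Lemma \ref{l2.6}. The one place you genuinely diverge is the term $\frac{1}{|B|}\int_B\|\{D_{t_B}T(f_k\chi_{Q_0})(y)\}\|_{l^q}\,dy$ (the paper's ${\rm E}_2$), and that is exactly where your argument, as written, breaks. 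Having already applied Minkowski to put the $l^q$-norm inside the $w$-integral, the quantity you must control on each annulus $A_j=2^{j+1}Q_0\setminus 2^jQ_0$ is $\int_{A_j}\|\{T(A_{t_0}g_k)(w)\}\|_{l^q}\,dw$. Your ``componentwise, then $l^q$ at the end'' step controls instead $\big\|\big\{\int_{A_j}|T(A_{t_0}g_k)(w)|\,dw\big\}\big\|_{l^q}$, and Minkowski's integral inequality goes the wrong way: this quantity is dominated by the one you need, not the other way around. So the estimate does not bound what your own reduction requires. The idea is repairable: carry out the splitting $Tg_k=T(g_k-A_{t_0}g_k)+T(A_{t_0}g_k)$ and the restriction to $\mathbb{R}^n\setminus CQ_0$ \emph{before} applying Minkowski, prove the pointwise, fixed-$k$ bound $\big|D_{t_B}\big((T A_{t_0}g_k)\chi_{\mathbb{R}^n\setminus CQ_0}\big)(y)\big|\lesssim Mf_k(x)$ for $y\in B$ (your Young/H\"older computation does give this, since $\sum_j2^{-j(n+\eta)}2^{jn/q'}<\infty$), and only then take the $l^q$-norm in $k$; that legitimately produces $\|\{Mf_k(x)\}\|_{l^q}$.

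Beyond the repair, the detour through $A_{t_0}$ is unnecessary, and your stated reason for it (``no estimate routed through $\|\{f_k\}\|_{l^q}$ supplies summable decay in $j$'') is not correct. Lemma \ref{l2.1} holds for any pair of cubes $Q_2\subset Q_1$ with the average taken over the \emph{larger} cube; applying it with $Q_1=2^{j+1}Q_0$ and $Q_2=Q_0$ gives $\int_{2^{j+1}Q_0}\|\{T(f_k\chi_{Q_0})(w)\}\|_{l^q}\,dw\lesssim 2^{jn}|Q_0|\,\big\|\|\{f_k\}\|_{l^q}\big\|_{L\log L,\,Q_0}$, while (1.6)--(1.7) give $\int_B h_{t_B}(y,w)\,dy\lesssim 2^{-j(n+\eta)}$ for $w\in A_j$; the product $2^{-j\eta}$ is summable. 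This is precisely how the paper handles this term (its estimates for ${\rm F}_j$), no maximal function of the individual $f_k$ is needed there, and in the paper the term $\|\{Mf_k(x)\}\|_{l^q}$ arises solely from the far piece $T(f_k\chi_{\mathbb{R}^n\setminus 4B})-D_{t_B}T(f_k\chi_{\mathbb{R}^n\setminus 4B})$ (which you bound instead by $M(\|\{f_k\}\|_{l^q})(x)$, also acceptable). In short: same skeleton as the paper, one step that fails as written but can be fixed by reordering the decomposition and the use of Minkowski, and the extra machinery that caused the problem can simply be deleted in favor of the paper's direct use of Lemma \ref{l2.1} on dilated cubes.
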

\begin{proof}Let $x\in \mathbb{R}^n$, $B$ be a ball containing $x$ and $t_B=r_B^{s}$. Write
$$\frac{1}{|B|}\int_B\big\|\{|Tf_k(y)-D_{t_B}Tf_k(y)|\}\big\|_{l^q}dy\leq{\rm }E_1+{\rm E}_2+{\rm E_3},$$
with
$${\rm E}_1=\frac{1}{|B|}\int_B\|\{T(f_k\chi_{4B})(y)\}\|_{l^q}dy,$$
$${\rm E}_2=\frac{1}{|B|}\int_B\|\{D_{t_B}T(f\chi_{4B})(y)\}\|_{l^q}dy,$$
and
$${\rm E}_3=\frac{1}{|B|}\int_B\big\|\{|T(f_k\chi_{\mathbb{R}^n\backslash 4B})(y)-D_{t_B}T(f_k\chi_{\mathbb{R}^n\backslash 4B})(y)|\}\big\|_{l^q}dy.
$$
Recall that $T$ is bounded on $L^q(\mathbb{R}^n)$ (and so is bounded on $L^q(l^q;\,\mathbb{R}^n)$). Thus by Lemma \ref{l2.1} and Lemma \ref{l2.6},
$${\rm E}_1\lesssim \big\|\|\{f_k\}\|_{l^q}\big\|_{L\log L,\,4B}\lesssim M_{L\log L}(\|\{f_k\}\|_{l^q})(x).$$
On the other hand, it follows from Minkowski's inequality that
$$\|\{D_{t_B}T(f_k\chi_{4B})(y)\}\|_{l^q}\lesssim \int_{\mathbb{R}^n}|h_{t_B}(y,\,z)|\|\{T(f_k\chi_{4B})(z)\}\|_{l^q}dz
$$
Let $${\rm F}_0=\int_{16B}|h_{t_B}(y,\,z)|\|\{T(f_k\chi_{4B})(z)\}\|_{l^q}dz$$
and for $j\in\mathbb{N}$,
$${\rm F}_j=\int_{2^{j+5}B\backslash 2^{j+4}B}|h_{t_B}(y,\,z)|\|\{T(f_k\chi_{4B})(z)\}\|_{l^q}dz.$$
By the estimate (1.7)  and Lemma \ref{l2.1}, we know that
$${\rm F}_0\leq \big\|\|\{f_k\}\|_{l^q}\big\|_{L\log L,\,4B},$$
and
$${\rm F}_j\leq \frac{1}{|B|}h(2^j)\int_{2^{j+5}B}\|\{T(f_k\chi_{4B})(z)\}\|_{l^q}dz\lesssim 2^{-\delta j}\big\|\|\{f_k\}\|_{l^q}\big\|_{L\log L,\,4B}.$$
This, in turn gives us that
$${\rm E}_2\lesssim \big\|\|\{f_k\}\|_{l^q}\big\|_{L\log L,\,4B}.$$
Finally, Assumption \ref{a1.1} tells us that for each $k$ and $y\in B$,
$$\big|T(f_k\chi_{\mathbb{R}^n\backslash 4B})(y)-D_{t_B}T(f_k\chi_{\mathbb{R}^n\backslash 4B}(y)\big|\lesssim Mf_k(x),
$$
which implies that
$${\rm E}_3\lesssim \|\{Mf_k(x)\}\|_{l^q}.$$
Combining the estimates for ${\rm E}_1$, ${\rm E}_2$ and ${\rm E}_3$ then leads to our desired conclusion.
\end{proof}

Let $\mathscr{D}$ be a dyadic grid. Associated with $\mathscr{D}$, define the sharp maximal function $M^{\sharp}_{\mathscr{D}}$ as
$$M^{\sharp}_{\mathscr{D}}f(x)=\sup_{Q\ni x\atop{Q\in\mathscr{D}}}\inf_{c\in\mathbb{C}}\frac{1}{|Q|}\int_{Q}|f(y)-c|dy.$$
For $\delta\in (0,\,1)$, let $ M_{\mathscr{D},\,\delta}^{\sharp}f(x)=\big[M^{\sharp}_{\mathscr{D}}(|f|^{\delta})(x)\big]^{1/\delta}.$
Repeating the argument in \cite[p. 153]{ste2}, we can verify that if $\Phi$ is a increasing function on $[0,\,\infty)$ which satisfies that
$$\Phi(2t)\leq C\Phi(t),\,t\in [0,\,\infty),$$ then
\begin{eqnarray}&&\sup_{\lambda>0}\Phi(\lambda)|\{x\in\mathbb{R}^n:|h(x)|>\lambda\}|\lesssim
\sup_{\lambda>0}\Phi(\lambda)|\{x\in\mathbb{R}^n:M_{\mathscr{D},\delta}^{\sharp}h(x)>\lambda\}|,\end{eqnarray}
provided that $\sup_{\lambda>0}\Phi(\lambda)|\{x\in\mathbb{R}^n:\,M_{\mathscr{D},\,\delta}h(x)>\lambda\}|<\infty$.
\begin{lemma}\label{l2.5}
Under the assumption of Theorem \ref{t1.2}, for each $\lambda>0$,
$$\big|\{x\in\mathbb{R}^n:\,\|\{MTf_k(x)\}\|_{l^q}>\lambda\}\big|\lesssim \int_{\mathbb{R}^n}\frac{\|\{f_k\}\|_{l^q}}{\lambda}
\log\Big(1+\frac{\|\{f_k\}\|_{l^q}}{\lambda}\Big)dx.$$
\end{lemma}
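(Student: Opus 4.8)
The plan is to run the Calder\'on--Zygmund decomposition of $\|\{f_k\}\|_{l^q}$ at height $\lambda$ exactly as in the proof of Lemma~\ref{l2.6}, writing $f_k=f_k^1+f_k^2+f_k^3$ with $f_k^1=f_k\chi_{\mathbb{R}^n\backslash\cup_lQ_l}$, $f_k^2=\sum_lA_{t_{Q_l}}b_{k,l}$, $f_k^3=\sum_l(b_{k,l}-A_{t_{Q_l}}b_{k,l})$, $b_{k,l}=f_k\chi_{Q_l}$, $t_{Q_l}=\ell(Q_l)^s$, and $\Omega=\cup_l4nQ_l$. The structural point is that $M$ acts coordinatewise, so the operator in play is the vector-valued Hardy--Littlewood maximal operator $\mathbf{M}(\{g_k\})=\{Mg_k\}$, for which one has the Fefferman--Stein weak type bound $\mathbf{M}\colon L^1(l^q;\mathbb{R}^n)\to L^{1,\infty}(l^q;\mathbb{R}^n)$ and the strong bounds $\mathbf{M}\colon L^{q_0}(l^q;\mathbb{R}^n)\to L^{q_0}(l^q;\mathbb{R}^n)$ for every $q_0\in(1,\infty)$. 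Since $T$ is bounded on $L^{q_0}(l^q;\mathbb{R}^n)$ for every $q_0\in(1,\infty)$ (by Lemma~\ref{l2.6}, the $L^{q_0}$-boundedness of $T$, and vector-valued interpolation), the composition $\mathbf{M}\circ T$ is bounded on $L^{q_0}(l^q;\mathbb{R}^n)$ for all such $q_0$; I fix one $q_0\ge 2$. By linearity of $T$ I may assume $\lambda=1$, and by a routine limiting argument the sequence $\{f_k\}$ may be taken finite. Write $\mathrm{I}$ for the right-hand side of the asserted inequality. Splitting each $Q_l$ according to whether $\|\{f_k\}\|_{l^q}>\lambda/2$ and using $\log(1+t)\ge\log\tfrac32$ for $t>\tfrac12$, one gets $|Q_l|\lesssim\lambda^{-1}\int_{Q_l\cap\{\|\{f_k\}\|_{l^q}>\lambda/2\}}\|\{f_k\}\|_{l^q}$, whence both $|\Omega|\lesssim\mathrm{I}$ and $\int_{\cup_lQ_l}\|\{f_k\}\|_{l^q}\lesssim\lambda\,\mathrm{I}$; since $\mathbf{M}(Tf_k)\le\mathbf{M}(Tf_k^1)+\mathbf{M}(Tf_k^2)+\mathbf{M}(Tf_k^3)$ it suffices to estimate the level set of each $\|\{MTf_k^j\}\|_{l^q}$ at height $\lambda/3$.

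For the two ``bounded'' pieces I will use the strong $L^{q_0}(l^q)$-bound. Since $\|\{f_k\}\|_{l^q}\lesssim\lambda$ off $\cup_lQ_l$ and $t^{q_0}\lesssim t\log(1+t)$ on bounded intervals (this is exactly where $q_0\ge 2$ is needed), $\lambda^{-q_0}\|\{f_k^1\}\|_{L^{q_0}(l^q)}^{q_0}=\lambda^{-q_0}\int_{\mathbb{R}^n\backslash\cup_lQ_l}\|\{f_k\}\|_{l^q}^{q_0}\lesssim\mathrm{I}$; and the duality/Fefferman--Stein computation from the proof of Lemma~\ref{l2.6}, carried out with $L^{q_0}$--$L^{q_0'}$ duality, yields $\|\{f_k^2\}\|_{L^{q_0}(l^q)}^{q_0}\lesssim\lambda^{q_0}|\Omega|$, so that $\lambda^{-q_0}\|\{f_k^2\}\|_{L^{q_0}(l^q)}^{q_0}\lesssim|\Omega|\lesssim\mathrm{I}$. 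Chebyshev's inequality and the $L^{q_0}(l^q)$-boundedness of $\mathbf{M}\circ T$ then bound $|\{x\colon\|\{MTf_k^1(x)\}\|_{l^q}>\lambda/3\}|$ and $|\{x\colon\|\{MTf_k^2(x)\}\|_{l^q}>\lambda/3\}|$ by $\mathrm{I}$.

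The real work is the bad part $f^3$: I claim $\{Tf_k^3\}\in L^1(l^q;\mathbb{R}^n)$ with $\|\{Tf_k^3\}\|_{L^1(l^q)}\lesssim\lambda\,\mathrm{I}$, and then the weak $(1,1)$ bound for $\mathbf{M}$ gives $|\{x\colon\|\{MTf_k^3(x)\}\|_{l^q}>\lambda/3\}|\lesssim\lambda^{-1}\|\{Tf_k^3\}\|_{L^1(l^q)}\lesssim\mathrm{I}$, finishing the proof. To prove the claim, write $f_k^3=\sum_lh_{k,l}$, $h_{k,l}=b_{k,l}-A_{t_{Q_l}}b_{k,l}$, use $\|\{Tf_k^3\}\|_{l^q}\le\sum_l\|\{Th_{k,l}\}\|_{l^q}$, and split $\int_{\mathbb{R}^n}\|\{Th_{k,l}\}\|_{l^q}=\int_{16nQ_l}+\int_{\mathbb{R}^n\backslash16nQ_l}$ (enlarging the constant $16n$ if $c_1$ forces it). On $\mathbb{R}^n\backslash16nQ_l$ one has $Th_{k,l}(y)=\int_{Q_l}\bigl(K(y,z)-K_{A_{t_{Q_l}}}(y,z)\bigr)b_{k,l}(z)\,dz$ as in Lemma~\ref{l2.6}, so Minkowski's inequality and Assumption~\ref{a1.0} give $\int_{\mathbb{R}^n\backslash16nQ_l}\|\{Th_{k,l}\}\|_{l^q}\lesssim\int_{Q_l}\|\{f_k\}\|_{l^q}$, which sums to $\lesssim\lambda\,\mathrm{I}$. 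On $16nQ_l$, since $b_{k,l}$ is supported in $Q_l$, decompose $h_{k,l}=h_{k,l}\chi_{16nQ_l}-A_{t_{Q_l}}b_{k,l}+(A_{t_{Q_l}}b_{k,l})\chi_{16nQ_l}$ and apply Lemma~\ref{l2.1} with $\varrho=0$: to $T$ (bounded on $L^q(l^q)$ and, by Lemma~\ref{l2.6}, of weak type $(1,1)$ on $L^1(l^q)$) applied to $h_{k,l}\chi_{16nQ_l}$ and to $(A_{t_{Q_l}}b_{k,l})\chi_{16nQ_l}$ with $Q_2=Q_1=16nQ_l$; and to $TA_{t_{Q_l}}$ (which has the same two properties uniformly in $Q_l$, since $A_{t_{Q_l}}$ is a uniformly bounded averaging operator) applied to $b_{k,l}$ with $Q_2=Q_l\subset Q_1=16nQ_l$. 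Using the elementary pointwise bound $\|\{A_{t_{Q_l}}b_{k,l}(y)\}\|_{l^q}\lesssim|Q_l|^{-1}\int_{Q_l}\|\{f_k\}\|_{l^q}\lesssim\lambda$, the standard comparison $\|g\|_{L\log L,Q}\approx|Q|^{-1}\int_Q|g|\log(e+|g|/\langle|g|\rangle_Q)$, and $\langle\|\{f_k\}\|_{l^q}\rangle_{Q_l}\approx\lambda$, these applications yield $\int_{16nQ_l}\|\{Th_{k,l}\}\|_{l^q}\lesssim|Q_l|\bigl(\lambda+\bigl\|\,\|\{f_k\}\|_{l^q}\,\bigr\|_{L\log L,Q_l}\bigr)\lesssim\lambda|Q_l|+\int_{Q_l}\|\{f_k\}\|_{l^q}\log\bigl(e+\|\{f_k\}\|_{l^q}/\lambda\bigr)$. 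Summing over the disjoint $Q_l$, using $\log(e+t)\le1+\log(1+t)$, and invoking the two bounds for $|\Omega|$ and $\int_{\cup_lQ_l}\|\{f_k\}\|_{l^q}$ recorded above gives $\|\{Tf_k^3\}\|_{L^1(l^q)}\lesssim\lambda\,\mathrm{I}$, as claimed.

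The point that needs care is the logarithmic factor on the right: the crude estimates (in particular the weak $(1,1)$ bound for $T$ itself) only produce $\lambda^{-1}\|\{f_k\}\|_{L^1(l^q)}$, which is \emph{not} controlled by $\mathrm{I}$, since the integrand of $\mathrm{I}$ is quadratic in $\|\{f_k\}\|_{l^q}/\lambda$ where that ratio is small. The logarithm is recovered by two devices: using the strong $L^{q_0}(l^q)$-bound with $q_0\ge 2$ for the pieces $f^1,f^2$ (whose $l^q$-norm is essentially $\lesssim\lambda$), and using Lemma~\ref{l2.1}, whose $L(\log L)$-type local estimate is exactly what absorbs $\int_{16nQ_l}\|\{Th_{k,l}\}\|_{l^q}$. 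A secondary, more elementary point is that because $M$ is applied in each coordinate, $\|\{MTf_k\}\|_{l^q}$ is genuinely larger than $M(\|\{Tf_k\}\|_{l^q})$ in general, so Corollary~\ref{l2.3} cannot be invoked directly and one must work throughout with the vector-valued operator $\mathbf{M}$ and its endpoint inequalities.
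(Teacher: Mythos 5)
Your route is genuinely different from the paper's: instead of the sharp-maximal/good-$\lambda$ machinery (the distributional inequality (2.5), the local oscillation estimate (2.7), Corollary \ref{l2.3} and the pointwise bound of Lemma \ref{l2.4}), you run a Calder\'on--Zygmund decomposition at height $\lambda$ and estimate the three pieces directly, in the spirit of the classical $L\log L$ weak-type arguments for $M\circ T$. Most of it checks out: the splitting of each $Q_l$ according to $\|\{f_k\}\|_{l^q}>\lambda/2$ correctly converts $|\cup_lQ_l|$ and $\int_{\cup_lQ_l}\|\{f_k\}\|_{l^q}$ into the right-hand side (this is where the quadratic behaviour of $t\log(1+t)$ near $0$ is respected); the $L^{q_0}$--$L^{q_0'}$ duality bound $\|\{f_k^2\}\|_{L^{q_0}(l^q)}^{q_0}\lesssim\lambda^{q_0}|\cup_lQ_l|$ goes through exactly as in Lemma \ref{l2.6}; and the key claim $\|\{Tf_k^3\}\|_{L^1(l^q)}\lesssim\lambda\,\mathrm{I}$ --- Assumption \ref{a1.0} plus Minkowski away from the cubes, Lemma \ref{l2.1} with $\varrho=0$ (applied to $T$ and to the uniformly bounded family $TA_{t_{Q_l}}$) near the cubes, together with the Luxemburg-norm comparison on the Calder\'on--Zygmund cubes --- is correct and is where the logarithm is honestly produced. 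Your closing observation is also to the point: $\|\{MTf_k\}\|_{l^q}$ is not dominated by $M(\|\{Tf_k\}\|_{l^q})$, which is why you must work with the vector-valued maximal operator throughout; the paper gets around the same difficulty through (2.7) instead.

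There is, however, one step that is not justified as written: the boundedness of $T$ (hence of $\mathbf{M}\circ T$) on $L^{q_0}(l^q;\mathbb{R}^n)$ for some $q_0\ge2$. You genuinely need $q_0\ge 2$, since the numerical inequality $t^{q_0}\lesssim t\log(1+t)$ on bounded intervals, used for $f^1$, fails for $q_0<2$. But when $q<2$ the tools you invoke do not deliver this bound: interpolating Lemma \ref{l2.6} ($L^1(l^q)\to L^{1,\infty}(l^q)$) with the diagonal bound on $L^q(l^q)$ only reaches exponents $p\in(1,q]$, and the scalar $L^{q_0}(\mathbb{R}^n)$-boundedness of $T$ does not self-improve to an $l^q$-valued bound for $q\neq q_0$ --- vector-valued extensions are not automatic for general bounded operators. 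So for $q\in(1,2)$ the treatment of $f^1$ is incomplete (for $f^2$ any $q_0>1$ would in fact suffice, since there you only need Chebyshev against $\lambda^{q_0}|\cup_lQ_l|$). The missing fact is true and can be supplied in at least two ways: (i) quote Martell's weighted bounds \cite{mar}, which give $T$ bounded on $L^{q}(\mathbb{R}^n,w)$ for all $w\in A_q(\mathbb{R}^n)$, and then Rubio de Francia extrapolation \cite{dra} to obtain $\|\{Tf_k\}\|_{L^{p}(l^q;\mathbb{R}^n)}\lesssim\|\{f_k\}\|_{L^{p}(l^q;\mathbb{R}^n)}$ for all $p,q\in(1,\infty)$; or (ii) observe that Assumption \ref{a1.1} yields, after integration, a Hörmander-type condition of the form required in Assumption \ref{a1.0} for the adjoint of $T$ relative to the adjoint approximation $D_t^{*}$, rerun Lemma \ref{l2.6} for the adjoint in $L^1(l^{q'};\mathbb{R}^n)$, interpolate, and dualize. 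With either patch inserted at that single point, your argument is complete and gives an alternative, more classical proof of the lemma.
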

\begin{proof} By the well known one-third trick  (see \cite[Lemma 2.5]{hlp}), we only need to prove that, for each dyadic grid $\mathscr{D}$, the inequality
\begin{eqnarray}&&\big|\big\{x\in\mathbb{R}^n:\big\|\big\{M_{\mathscr{D}}(Tf_k)(x)\big\}\big\|_{l^q}>1\big\}\big|\\
&&\quad\lesssim \int_{\mathbb{R}^n}
\|\{f_k(x)\}\|_{l^q}\log\big(1+\|\{f_k(x)\}\|_{l^q}\big)dx.\nonumber
\end{eqnarray}holds when $\{f_k\}$ is finite. As in the proof of Lemma 8.1 in \cite{csmp}, we can very that
for each cube $Q\in\mathscr{D}$, $\delta\in (0,\,1)$,
\begin{eqnarray*}\inf_{c\in\mathbb{C}}\Big(\frac{1}{|Q|}\int_{Q}\Big|\|\{M_{\mathscr{D}}f_k(y)\}\|_{l^q}-c\Big|^{\delta}dy
\Big)^{\frac{1}{\delta}}&\lesssim & \Big(\frac{1}{|Q|}\int_{Q}\|\{M_{\mathscr{D}}(f_k\chi_Q)\}\|_{l^q}^{\delta}\Big)^{\frac{1}{\delta}}\nonumber\\
&\lesssim&\langle\|\{f_k\chi_Q\}\|_{l^q}\rangle_{Q},\nonumber
\end{eqnarray*}
where in the last inequality, we invoked the fact that $M_{\mathscr{D}}$ is bounded from $L^1(l^q;\,\mathbb{R}^n)$ to $L^{1,\,\infty}(l^q;\,\mathbb{R}^n)$.  This, in turn, implies that
\begin{eqnarray}M_{\mathscr{D},\,\delta}^{\sharp}\big(\|\{M_{\mathscr{D}}f_k\}\|_{l^q}\big)(x)\lesssim M_{\mathscr{D}}\big(\|\{f_k\}\|_{l^q}\big)(x).\end{eqnarray}
Let $\Phi(t)=t\log^{-1}(1+t^{-1})$. It follows from (2.5), (2.7), (2.1) and Lemma \ref{l2.4} that
\begin{eqnarray*}
&&\big|\{x\in\mathbb{R}^n:\,\|\{M_{\mathscr{D}}Tf_k(x)\}\|_{l^q}>1\}\big|\\
&&\quad\lesssim\sup_{t>0}\Phi(t)
\big|\{x\in\mathbb{R}^n:\,M_{\mathscr{D},\,\delta}^{\sharp}\big(\|\{M_{\mathscr{D}}Tf_k\}\|_{l^q}\big)(x)>t\}\big|\\
&&\quad\lesssim\sup_{t>0}\Phi(t)
\big|\{x\in\mathbb{R}^n:\,M\big(\|\{Tf_k\}\|_{l^q}\big)(x)>\lambda\}\big|\\
&&\quad\lesssim\sup_{t>0}\Phi(t)|\{x\in\mathbb{R}^n:\, M_{D}^{\sharp}(\{Tf_k\})(x)>t\}|\\
&&\quad\lesssim\sup_{t>0}\Phi(t)\big|\{x\in\mathbb{R}^n:\,M_{L\log L}(\|\{f_k\}\|_{l^q})(x)+\|\{Mf_k(x)\}\|_{l^q}>t\}\big|\\
&&\quad\lesssim \int_{\mathbb{R}^n}\|\{f_k(x)\}\|_{l^q}\log(1+\|\{f_k(x)\}\|_{l^q}\big)dx,
\end{eqnarray*}
where in the last inequality, we have invoked the fact that $M$ is bounded from $L^1(l^q;\,\mathbb{R}^n)$ to $L^{1,\,\infty}(l^q;\,\mathbb{R}^n)$ (see \cite{fes}).
This establish (2.6) and completes the proof of Lemma \ref{l2.5}.\end{proof}

{\it Proof of Theorem \ref{t1.3}}. Let $q\in (1,\,\infty)$. We know by Lemma \ref{l2.6} that $T$ is bounded from $L^1(l^q;\,\mathbb{R}^n)$ to $L^{1,\,\infty}(l^q;\,\mathbb{R}^n)$. On the other hand, it was proved in \cite{duongmc} (see also \cite{mar}) that under the assumption of Theorem \ref{t1.3},
$$T^*f(x)\leq MTf(x)+Mf(x).$$
Thus by Lemma \ref{l2.5}, for each $\lambda>0$,
$$\big|\{x\in\mathbb{R}^n:\|\{T^*f_k(x)\}\|_{l^q}>\lambda\}\big|\lesssim \int_{\mathbb{R}^n}\frac{\|\{f_k(x)\}\|_{l^q}}{\lambda}\log\big(1+\frac{\|\{f_k(x)\}\|_{l^q}}{\lambda}\big)dx.$$
Therefore, it suffices to consider $\mathcal{M}_T$ and $\mathcal{M}_{T^*}$. On the other hand, it was proved in that maximal operator $M_{L\log L}$ satisfies that
\begin{eqnarray*}&&\big|\big\{x\in\mathbb{R}^n:\big\|\big\{M_{L\log L}f_k(x)\big\}\big\|_{l^q}>\lambda\big\}\big|\\
&&\quad\lesssim \int_{\mathbb{R}^n}
\frac{\|\{f_k(x)\}\|_{l^q}}{\lambda}\log\Big(1+\frac{\|\{f_k(x)\}\|_{l^q}}{\lambda}\Big)dx.\nonumber
\end{eqnarray*}Thus, by Lemma \ref{l2.5}, our proof is now reduced to proving that  the inequalities
\begin{eqnarray}\mathcal{M}_Tf(x)\lesssim MTf(x)+M_{L\log L}f(x).\end{eqnarray}
and
\begin{eqnarray}\mathcal{M}_{T^*}f(x)\lesssim MTf(x)+M_{L\log L}f(x).\end{eqnarray}
hold. Without loss of generality, we assume that $c_2>1$.

Let $Q\subset \mathbb{R}^n$ be a cube and $x,\,\xi\in Q$.
Set $t_Q=\big(\frac{1}{c_2\sqrt{n}}\ell(Q)\big)^{s}$ and write
\begin{eqnarray*}
T(f\chi_{\mathbb{R}^n\backslash 3Q})(\xi)&=&D_{t_Q}Tf(\xi)-D_{t_Q}T(f\chi_{3Q})(\xi)\\
&&+\Big(T(f\chi_{\mathbb{R}^n\backslash 3Q})(\xi)-D_{t_Q}
T(f\chi_{\mathbb{R}^n\backslash 3Q})(\xi)\Big).
\end{eqnarray*}
A trivial computation involving (1.8) leads to that
\begin{eqnarray*}
|D_{t_Q}Tf(\xi)|&\lesssim&|Q|^{-1}\sum_{j=1}^{\infty}\int_{2^{j}nt_Q^{\frac{1}{s}}<|\xi-y|\leq 2^{j+1}nt_Q^{\frac{1}{s}}}h\Big(\frac{|\xi-y|}{t_Q^{\frac{1}{s}}}\Big)|Tf(y)|dy\\
&&+|Q|^{-1}\int_{|\xi-y|\leq 2nt_Q^{\frac{1}{s}}}|Tf(y)|dy\\
&\lesssim&|Q|^{-1}\sum_{j=1}^{\infty}\int_{2^{j-1}nt_Q^{\frac{1}{s}}<|x-y|\leq 2^{j+2}nt_Q^{\frac{1}{s}}}h\Big(\frac{|x-y|}{2t_Q^{\frac{1}{s}}}\Big)|Tf(y)|dy\\
&&+|Q|^{-1}\int_{|x-y|\leq 3nt_Q^{\frac{1}{s}}}|Tf(y)|dy\\
&\lesssim&MTf(x).
\end{eqnarray*}
On the other hand, it follows from Lemma \ref{l2.1} that
\begin{eqnarray*}
|D_{t_Q}T(f\chi_{3Q})(\xi)|&\lesssim&\frac{1}{|Q|}\sum_{j=1}^{\infty}\int_{2^{j-1}nt_Q^{\frac{1}{s}}<|x-y|\leq 2^{j+2}nt_Q^{\frac{1}{s}}}h\Big(\frac{|x-y|}{2t_Q^{\frac{1}{s}}}\Big)|T(f\chi_{3Q})(y)|dy\\
&&+|Q|^{-1}\int_{|x-y|\leq 3nt_Q^{\frac{1}{s}}}|T(f\chi_{3Q})(y)|dy\\
&\lesssim& M_{L\log L}f(x).
\end{eqnarray*}
Finally, Assumption \ref{a1.1} tells us that
\begin{eqnarray*}
\Big|T(f\chi_{\mathbb{R}^n\backslash 3Q})(\xi)-D_{t_Q}T(f\chi_{\mathbb{R}^n\backslash 3Q})(\xi)\Big|&\lesssim&
\int_{\mathbb{R}^n\backslash 3Q}\big|K(\xi,\,y)-K^{t_Q}(\xi,\,y)\big||f(y)|dy\\
&\lesssim&t_{Q}^{\frac{\alpha}{s}}\int_{\mathbb{R}^n\backslash 3Q}\frac{1}{|\xi-y|^{n+\alpha}}|f(y)|dy\\
&\lesssim &Mf(x).
\end{eqnarray*}
Combining the estimates above leads to (2.8).

It remains to prove (2.9). Let $x,\,\xi\in Q$. Observe that ${\rm supp}\,\chi_{\mathbb{R}^n\backslash 3Q}(y)\subset\{y:\, |y-x|\geq \ell(Q)\}.$
\begin{eqnarray}T^*(f\chi_{\mathbb{R}^n\backslash 3Q})(\xi)\leq |T(f\chi_{\mathbb{R}^n\backslash 3Q})(\xi)|+\sup_{\epsilon\geq \ell(Q)}|T_{\epsilon}f(\xi)|.\end{eqnarray}
Now let $\epsilon\geq\ell(Q)$. Write
\begin{eqnarray*}
T_{\epsilon}(f\chi_{\mathbb{R}^n\backslash 3Q})(\xi)&=&D_{(\epsilon/c_2)^{s}}Tf(\xi)-D_{(\epsilon/c_2)^{s}}T(f\chi_{3Q})(\xi)\\
&&+\Big(T_{\epsilon}(f\chi_{\mathbb{R}^n\backslash 3Q})(\xi)-D_{\epsilon^{s}}
T(f\chi_{\mathbb{R}^n\backslash 3Q})(\xi)\Big).
\end{eqnarray*}
As in the argument for $\mathcal{M}_T$, we can verify that
$$|D_{(\epsilon/c_2)^{s}}Tf(\xi)|\lesssim M(Tf)(x)$$ and
$$|D_{(\epsilon/c_2)^{s}}T(f\chi_{3Q})(\xi)|\lesssim M_{L\log L}f(x).$$
As in \cite{duongmc}, write
\begin{eqnarray*}
&&T_{\epsilon}(f\chi_{\mathbb{R}^n\backslash 3Q})(\xi)-D_{\epsilon^{s}}T(f\chi_{\mathbb{R}^n\backslash 3Q})(\xi)=\int_{|\xi-y|\leq \epsilon}K^{(\epsilon/c_2)^{s}}(\xi,\,y)f(y)\chi_{\mathbb{R}^n\backslash 3Q}(y)dy\\
&&\quad+\int_{|\xi-y|>\epsilon}\big(K(\xi,\,y)-K^{(\epsilon/c_2)^{s}}(\xi,\,y)\big)f(y)\chi_{\mathbb{R}^n\backslash 3Q}(y)dy.
\end{eqnarray*}
The fact that $K^{(\epsilon/c_2)^{s}}$ satisfies the size condition (1.9), implies that
$$\Big|\int_{|\xi-y|\leq \epsilon}K^{(\epsilon/c_2)^{s}}(\xi,\,y)f(y)dy\Big|\lesssim\epsilon^{-n}\int_{|\xi-y|<\epsilon}|f(y)|dy\lesssim Mf(x).
$$
On the other hand, by the Assumption \ref{a1.1}, we obtain that
\begin{eqnarray*}
&&\Big|\int_{|\xi-y|>\epsilon}\big(K(\xi,\,y)-K^{(\epsilon/c_2)^{s}}(\xi,\,y)\big)f(y)\chi_{\mathbb{R}^n\backslash 3Q}(y)dy\Big|\lesssim Mf(x).
\end{eqnarray*}
Therefore,
$$\sup_{\epsilon\geq \ell(Q)}|T_{\epsilon}(f\chi_{\mathbb{R}^n\backslash 3Q})(\xi)|\lesssim MTf(x)+M_{L\log L}f(x),$$
which, via the estimates (2.8) and (2.10), shows that
$$\mathcal{M}_{T^*}f(x)\lesssim MTf(x)+M_{L\log L}f(x).$$
This completes the proof of Theorem \ref{t1.3}.
\qed

\end{document}